\def\LaTeX{\leavevmode L\raise.42ex
    \hbox{\kern-.3em\size{\sf@size}{0pt}\selectfont A}\kern-.15em\TeX}
\newcommand{\BibTeX}{{\rm B\kern-.05em{\sc i\kern-.025emb}\kern-.08em\TeX}}
\newtheorem{thm}{Theorem}[section]
\newtheorem{prop}[thm]{Proposition}
\newtheorem{prop-def}[thm]{Proposition-Definition}
\newtheorem{lem}[thm]{Lemma}
\newtheorem{cor}[thm]{Corollary}
\theoremstyle{definition}
\theoremstyle{remark}
\theoremstyle{remark}
\def\@currentlabel{2.1}\label{e:dispaa}
\def\@currentlabel{2.21}\label{e:dispau}
\def\@currentlabel{2.22}\label{e:dispav}
\def\@currentlabel{2.23}\label{e:dispaw}
\def\@currentlabel{2.24}\label{e:dispax}
\def\theequation{\thesection.\@arabic\c@equation}
\def\alphenumi{%
  \def\theenumi{\alph{enumi}}%
  \def\p@enumi{\theenumi}%
  \def\labelenumi{(\@alph\c@enumi)}}
\newcommand{\e}{\epsilon}
\newcommand{\Real}{\mathbb{R}}
\newcommand{\ddt}{\frac{d}{dt}}
\newcommand{\dd}[1]{\frac{d}{d#1}}
\newcommand{\flow}[2]{\overrightarrow{\exp}\int_{#1}^{#2}}
\begin{document}

\title[Continuity of Optimal Control Cost and Weak KAM Theory]
{Continuity of Optimal Control Costs and its application to Weak KAM Theory}

\author{Andrei Agrachev}
\email{agrachev@sissa.it}
\address{International School for Advanced Studies
via Beirut 4 - 34014, Trieste, Italy and Steklov Mathematical Institute, ul. Gubkina 8,
Moscow, 119991 Russia}

\author{Paul W.Y. Lee}
\email{plee@math.berkeley.edu}
\address{Department of Mathematics, University of California at Berkeley, 970 Evans Hall \#3840 Berkeley, CA 94720-3840 USA}

\thanks{The first author was supported by PRIN and the second author was supported by the NSERC postdoctoral fellowship.}

\begin{abstract}
We prove continuity of certain cost functions arising from optimal control of affine control systems. We give sharp sufficient conditions for this continuity. As an application, we prove a version of weak KAM theorem and consider the Aubry-Mather problems corresponding to these systems.
\end{abstract}

\maketitle

%%% ----------------------------------------------------------------------

\

\section{Introduction}

Integrability of Hamiltonian systems has been a subject of  considerable interest for several decades. One way to understand the dynamics of such systems is to find a family of smooth solutions, called generating functions, to the time-independent Hamilton-Jacobi equation. These generating functions define symplectic transformations which transform the given completely integrable Hamiltonian system to a much simpler one that are easily solvable. 

On the contrary, if the Hamiltonian system is not completely integrable, then it is natural to ask whether one can solve the Hamilton-Jacobi equation in certain weak sense. This is accomplished in, what is known as, the weak KAM theorem under certain assumptions on the Hamiltonian. More precisely, let $L:TM\to\Real$ be a Lagrangian defined on the tangent bundle $TM$ of a compact manifold $M$ which satisfies the following conditions:
\begin{enumerate}
\item the restriction of the Lagrangian $L$ to each tangent space has positive definite Hessian, 
\item $L(x,v)\geq C|v|^2+K$ for some Riemannian metric $|\cdot|$ and some constants $K,C>0$. 
\end{enumerate}

Let $H:T^*M\to\Real$ be the corresponding Hamiltonian defined by the Legendre transform:
\[
H(x,\alpha)=\sup_{v\in T_xM}[\alpha(v)-L(x,v)]. 
\]

The following is the weak KAM theorem mentioned above. It was first proven in \cite{LiPaVa} when $M$ is a torus and was extended to all compact manifolds in \cite{Fa1} (see also \cite{Go} for a version related to vakonomic mechanics). 

\begin{thm}
Under the above assumptions, there exists a unique constant $h$ such that the Hamilton-Jacobi equation 
\begin{equation}\label{sHJB}
H(x,df_x)=-h,
\end{equation}
has a viscosity solution.
\end{thm}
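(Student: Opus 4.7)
The plan is to follow Fathi's strategy via the Lax--Oleinik semigroup. For $x, y \in M$ and $t > 0$, I introduce the minimal action
\[
A_t(x, y) = \inf \left\{ \int_0^t L(\g(s), \dot\g(s))\, ds : \g(0) = x,\ \g(t) = y \right\},
\]
the infimum being over absolutely continuous curves. Tonelli's existence theorem, which uses coercivity (2) and strict convexity (1), guarantees the infimum is attained by a $C^2$ minimizer satisfying the Euler--Lagrange equation. I then define the negative Lax--Oleinik semigroup on $C(M)$ by
\[
T_t^- u(x) = \inf_{y \in M} \{u(y) + A_t(y, x)\},
\]
which one verifies is a semigroup of non-expansive maps in the sup norm.

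Existence of a viscosity solution reduces to finding a fixed point of $T_t^-$ modulo constants. The key uniform estimate is that for $t \geq 1$ the family $\{T_t^- u - \min T_t^- u\}$ is equi-Lipschitz with a constant depending only on $L$. This follows from coercivity: minimizers of $A_t(y, x)$ over $t \geq 1$ have a priori bounded velocity, so $|T_t^- u(x) - T_t^- u(x')| \leq C d(x, x')$ for a Riemannian distance $d$. Passing to the quotient $C(M)/\Real$, the induced map sends the compact convex set represented by $C$-Lipschitz functions of zero minimum (compact by Arzel\`a--Ascoli) into itself, and the Schauder--Tychonoff theorem supplies a fixed point. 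Lifting back, I obtain $u \in C(M)$ and a constant $h \in \Real$ with $T_t^- u = u - ht$ for all $t \geq 0$. That such $u$ is a viscosity solution of $H(x, df_x) = -h$ is standard for Tonelli Lagrangians, following from the defining infimum for $T_t^-$ together with Legendre duality between $H$ and $L$.

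Uniqueness of $h$ is established by a maximum-principle argument. Suppose $u_1, u_2$ are viscosity solutions with constants $h_1 > h_2$; both are then fixed points, $T_t^- u_i = u_i - h_i t$. Let $x_0$ maximize $u_2 - u_1$ on $M$, and pick $y^*$ almost attaining the infimum defining $T_t^- u_1(x_0)$, so that
\[
u_1(y^*) + A_t(y^*, x_0) \leq u_1(x_0) - h_1 t + \eps, \qquad u_2(y^*) + A_t(y^*, x_0) \geq u_2(x_0) - h_2 t.
\]
Subtracting yields $(u_2 - u_1)(y^*) \geq (u_2 - u_1)(x_0) + (h_1 - h_2) t - \eps$, which contradicts the maximality of $x_0$ once $t$ is large.

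The main obstacle I expect is the equi-Lipschitz estimate in the second step: securing a bound independent of both $u$ and $t$ requires careful a priori control on minimizing curves, converting the superlinear growth in (2) into a uniform bound on their velocities. Once that regularity is in hand, the remaining pieces (the fixed-point theorem, the viscosity characterization, and the uniqueness argument) are essentially formal.
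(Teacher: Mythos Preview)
Your proof is correct and follows Fathi's original strategy. Note, however, that the paper does not actually prove this theorem: it is quoted in the introduction as a known background result, with references to Lions--Papanicolaou--Varadhan and Fathi. The paper's own contribution is the control-theoretic weak KAM theorem (Theorem~\ref{wKAM}, proved in Section~4 via Theorem~\ref{wKAMcts}), where the Hamiltonian is in general neither strictly convex nor coercive.

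It is still instructive to compare your route to the one the paper uses for Theorem~\ref{wKAMcts}, since both end by producing a fixed point of the Lax--Oleinik semigroup up to a linear drift. You rely on equi-Lipschitz bounds coming from a priori velocity estimates on Tonelli minimizers, and then invoke the Schauder--Tychonoff theorem on the quotient $C(M)/\Real$. The paper instead works only with continuity of the cost $(t,x,y)\mapsto c_t(x,y)$: it shows the family $\{c_t:t\ge a\}$ is equicontinuous and satisfies $|c_t-ht|\le K$ for a unique $h$, deduces that $\{S_tf-ht:t\ge a\}$ is uniformly bounded and equicontinuous, and then obtains the fixed point by a monotonicity argument (set $\bar f=\inf_{t\ge a}(S_tf-ht)$, observe that $t\mapsto S_t\bar f-ht$ is increasing and bounded, and pass to the limit). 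This avoids any regularity of minimizers and any fixed-point theorem, and it transfers verbatim to the control setting, where your equi-Lipschitz step would break down because there is no coercivity to bound velocities. Conversely, in the Tonelli case your Lipschitz estimates buy more regularity: the fixed point you produce is automatically Lipschitz rather than merely continuous. Your uniqueness argument via the maximum of $u_2-u_1$ is also a bit different from the paper's, which simply divides the fixed-point relation by $n$ and uses $c_n/n\to h$.
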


In order to give the definition of viscosity solution, we first recall the concepts of sub- and super- differentials. If $f$ is a continuous function on a manifold $M$, then the sub-differential $d^-f_x$ of the function $f$ at a point $x$ is the subset of the cotangent space $T^*_xM$ defined by the following: a co-vector $p$ in the cotangent space $T^*_xM$ is contained in the sub-differential $d^-f_x$ of $f$ at $x$ if there exists a smooth function $g$ defined in a neighborhood $O$ of $x$ such that $dg_x=p$ and $g$ touches $f$ from above. By $g$ touching from above, we mean that $f(x)=g(x)$ and $f(y)\leq g(y)$ for all $y$ in the set $O$. The super-differential $d^+f$ of $f$ is defined in a similar way with the function $g$ touching from below instead. Let $G:\Real\times T^*M\to\Real$ be a continuous function, then a continuous function $f$ is called a sub-solution to the equation $G(f(x),x,p)=0$ if for each $p$ in the sub-differential $d^-f_x$,
\[
G(f(x),x,p)\leq 0.
\]
Similarly, $f$ is a super-solution if for each $p$ in the super-differential $d^+f_x$,
\[
G(f(x),x,p)\geq 0.
\]
If $f$ is both a super and a sub-solution, then it is called a viscosity solution (see \cite{CaSi} for various different characterizations of the sub-differential and viscosity solution).

In this paper, we study weak KAM theorem corresponding to Hamiltonians which arise from certain optimal control problems. More precisely, let $X_0,X_1,...,X_n$ be smooth vector fields on a compact manifold $M$ of dimension $m$ and consider the following family of ODEs, called control-affine system:
\begin{equation}\label{controlaff}
 \dot x(t)=F(x(t),u(t)):=X_0(x(t))+\sum_{i=1}^nu_i(t)X_i(x(t)),
\end{equation}
where $u(\cdot):=(u_1(\cdot),...,u_n(\cdot)):[0,T]\to \Real^n$ are essentially bounded measurable functions, called controls, and solutions to (\ref{controlaff}) are Lipschitz curves in $M$, called admissible paths.

Let $L:M\times\Real^n\to \Real$ be a smooth function, called Lagrangian. The optimal control cost $c_T$ corresponding to the above control affine system (\ref{controlaff}) and Lagrangian $L$ is the following function:
\begin{equation}\label{controlcost}
 c_T(x,y)=\inf\int_0^TL(x(t),u(t))dt,
\end{equation}
where the infimum is taken over all pairs $(x(\cdot),u(\cdot))$ which satisfies the affine control system (\ref{controlaff}) and the boundary conditions $x(0)=x$ and $x(T)=y$.

Since there may exist points which are not connected by any admissible path, the above cost function is not always well-defined without additional assumptions. We recall that a family of vector fields $\{X_1,...,X_n\}$ is said to be $k$-generating if the vector fields $X_i$ and their iterated Lie brackets up to $k-1$ order spanned each tangent space in $TM$. More precisely, the following holds for each point $x$ in the manifold $M$
\[
T_xM=\text{span} \{[X_{i_1},[X_{i_2},...,[X_{i_{l-1}},X_{i_l}]]](x)\mid 1\leq i_j\leq n,1\leq l\leq k\}.
\]
The family $\{X_1,...,X_n\}$ is bracket generating if it is $k$-generating for some $k$. If we assume that the family  $\{X_1,...,X_n\}$ is bracket generating, then any two points can be connected by an admissible path \cite{BrLo}. Therefore, under this assumption, the cost $c_T$ in (\ref{controlcost}) is well-defined for any $T>0$ and any points $x$,$y$ on the manifold $M$. 

In this paper, we prove continuity of the optimal control cost $c_T$ under some growth and convexity conditions on the Lagrangian $L$ (see Theorem \ref{morecontinuity}). A simple useful corollary of the general continuity result is as follows:

\begin{thm}\label{continuity}
Assume that the Lagrangian $L$ and the vector fields $X_1,...,X_n$ satisfy the following conditions:
\begin{enumerate}
\item $C_1|u|^q+K_1\leq L(x,u)\leq C_2|u|^2+K_2$,
\item $|\frac{\partial L(x,u)}{\partial x}|\leq C_3|u|^2$,
\item  the Hessian of $L$ in the $u$ variable is positive definite, and
\item $\{X_1,...,X_n\}$ is $3$-generating
\end{enumerate}
for some constants $C_1,C_2,C_3,K_1,K_2>0$ and some constant $q>1$. Then the cost function $(t,x,y)\mapsto c_t(x,y)$ defined in (\ref{controlcost}) is continuous.
\end{thm}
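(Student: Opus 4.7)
The plan is to prove continuity of $(T,x,y)\mapsto c_T(x,y)$ at each $(T_0,x_0,y_0)$ with $T_0>0$ by establishing lower and upper semi-continuity separately. The two sides of assumption (1) play complementary roles: the coercive lower bound $L\geq C_1|u|^q+K_1$ with $q>1$ provides compactness for the lower semi-continuity, while the quadratic upper bound $L\leq C_2|u|^2+K_2$ furnishes the $L^2$ estimates needed to graft short admissible connectors onto a near-minimizer.

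For lower semi-continuity, take $(T_n,x_n,y_n)\to(T_0,x_0,y_0)$ and near-minimizing trajectories $(\gamma_n,u_n)$ with cost converging to $c_{T_n}(x_n,y_n)$. Coercivity yields a uniform $L^q$ bound on $u_n$ (extended by zero to a common interval), so after extraction $u_n\rightharpoonup u_\infty$ weakly in $L^q$; since $q>1$, a H\"older estimate on $\int u_n$ makes the $\gamma_n$ equicontinuous, and Arzel\`a--Ascoli together with Gronwall produce uniform convergence $\gamma_n\to\gamma_\infty$. The affine dependence of (\ref{controlaff}) on $u$ allows passage to the limit, identifying $(\gamma_\infty,u_\infty)$ as an admissible curve from $x_0$ to $y_0$ on $[0,T_0]$, and the convexity of $L$ in $u$ from (3) together with a classical Ioffe-type semi-continuity theorem (convex integrand, weak-strong convergence) gives $c_{T_0}(x_0,y_0)\leq\liminf_n c_{T_n}(x_n,y_n)$.

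For upper semi-continuity, fix $\eps>0$ and select a near-minimizer $(\gamma,u)$ from $x_0$ to $y_0$ in time $T_0$ with cost $\leq c_{T_0}(x_0,y_0)+\eps$; then, given nearby $(T',x',y')$, build an admissible curve from $x'$ to $y'$ in time exactly $T'$. Hypothesis (4) equips $M$ with a step-$3$ sub-Riemannian structure from $X_1,\dots,X_n$, so by the ball-box theorem any two points within distance $\delta$ can be joined by an admissible trajectory of (\ref{controlaff}) on any sufficiently short interval $[0,\tau]$ with $\int|u|^2\leq C(\delta+\tau)^{2/3}/\tau$, and the upper bound in (1) converts this into a connector cost that vanishes with $\delta$ and $\tau$. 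I would prepend a connector from $x'$ to $x_0$, append one from $y_0$ to $y'$, and absorb the time discrepancy $T'-T_0$ entirely into the combined connector duration, keeping the middle segment as a time-translate of $\gamma$. The quadratic upper bound in (1) gives $\int_0^{T_0}|u|^2\leq M$, whereupon condition (2) bounds the change in the middle integrand under the small uniform shift $\|\tilde\gamma-\gamma\|_\infty$ by $C_3 M\,\|\tilde\gamma-\gamma\|_\infty$, which vanishes with the connector lengths. Summing the three contributions gives $c_{T'}(x',y')\leq c_{T_0}(x_0,y_0)+2\eps$ for $(T',x',y')$ sufficiently close to $(T_0,x_0,y_0)$.

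The main obstacle is the upper semi-continuity: because the drift $X_0$ is not eliminated by time reparametrization (a change $t\mapsto s(t)$ multiplies each $X_i$ by $s'$ but leaves $X_0$ in the wrong scale inside the intrinsic equation), one cannot directly compress or stretch the interior of $\gamma$ to fit the new horizon $T'$. My proposed workaround is to carry all the time mismatch in the durations of the two sub-Riemannian connectors; since their cost tends to zero regardless of small variations of their duration, this incurs no net penalty. The interplay of the $3$-generating hypothesis (geometric cheapness of connectors), the quadratic upper bound in (1) ($L^2$-to-$L$ conversion), and assumption (2) (insulation of the middle segment under grafting) is what I would expect to be the heart of the argument.
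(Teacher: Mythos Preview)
Your lower semi-continuity argument is standard and essentially what the paper invokes from \cite{CaRi}. The real issue is the upper semi-continuity step.

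The end-connector scheme cannot produce arbitrarily small cost in the presence of the drift $X_0$. Take your own estimate $\int_0^\tau|u|^2\le C(\delta+\tau)^{2/3}/\tau$: with $\delta=0$ this already equals $C\tau^{-1/3}$, which diverges as $\tau\to0$ and is bounded away from zero for every fixed $\tau$. The mechanism is transparent once one writes the connector via variation of constants: steering the full system from $a$ to $b$ in time $\tau$ amounts to steering the pulled-back driftless system from $a$ to $\Phi_\tau^{-1}(b)$, where $\Phi$ is the flow of $X_0$. The target $\Phi_\tau^{-1}(b)$ sits at Euclidean distance $\sim\tau$ from $b$, hence at sub-Riemannian distance $\sim\tau^{1/3}$ in the worst (weight-$3$) direction, and the energy to reach it in time $\tau$ is $\sim\tau^{-1/3}$. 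In particular $c_\tau(a,a)$ need not tend to $0$ as $\tau\to0$; for a generic drift it blows up like $\tau^{-1/3}$. So the claim that the connector cost ``vanishes with $\delta$ and $\tau$'' is false. Since your construction also forces $\tau_1+\tau_2=T'-T_0\to0$ (and is outright impossible when $T'<T_0$), the argument collapses. Note too that the ball-box theorem is a statement about the driftless system; invoking it directly for trajectories of (\ref{controlaff}) already hides the variation-of-constants step.

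The paper circumvents this by never building a free-standing connector. It perturbs the reference control instead: Lemma~\ref{time-dep-linear} gives
\[
End_{x_0}^T(u+v)=\flow 0 T F_{u(t)}\,dt\ \circ\ \flow 0 T\sum_i v_i(t)g_i^t\,dt\,(x_0),\qquad g_i^t=\Big(\flow 0 t F_{u(s)}\,ds\Big)^*X_i,
\]
so the correction $v$ is governed by a \emph{driftless} time-dependent system and no energy is spent fighting $X_0$. Since $g_i^0=X_i$, the $3$-generating hypothesis on $\{X_i\}$ makes $\{g_i^t\}$ $3$-generating as well. Local controllability with small $v$ then comes from a rescaled Chow construction (Proposition~\ref{infin}): one manufactures each bracket direction $ad_{g_{i_1}^\tau}\cdots g_{i_k}^\tau$ by a control supported on an interval of length $\epsilon^\beta$ with amplitude $\epsilon^{-\alpha}$, and the constraints $3\beta-2\alpha>k(\beta-\alpha)>0$, $\beta>\alpha p$, $p\le2$ are solvable exactly when $k=3,\ p\le2$. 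An implicit function argument yields, for each nearby $(t_i,x_i,y_i)$, a perturbation $v^i\to0$ in $L^p$ with $End_{x_i}^{t_i}(u+v^i)=y_i$, and assumptions (1) and (2) are then used---in the spirit you intended---to pass to the limit in $\int_0^{t_i}L(x^i,u+v^i)\,ds$. The crucial difference from your plan is that the perturbation lives \emph{inside} the reference trajectory rather than being grafted onto its ends.
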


As an application, we prove a version of the weak KAM theorem corresponding to the above optimal control cost $c$. More precisely, let $H:T^*M\to\Real$ be the Hamiltonian function defined by
\begin{equation}\label{Hamiltonian}
 H(x,\alpha_x)=\sup_{u\in U}\left[\alpha_x(F(x,u))-L(x,u)\right]
\end{equation}
Note that the Hamiltonian $H$ is, in general, neither fiberwise strictly convex nor coercive, which are basic assumptions on the classical weak KAM theory (see \cite{Fa2}). 

\begin{thm}(Weak KAM Theorem)\label{wKAM}
If we make the same assumptions as in Theorem \ref{continuity}, then there exists a unique constant $h$ such that the Hamilton-Jacobi equation (\ref{sHJB}) has a viscosity solution.
\end{thm}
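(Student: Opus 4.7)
The plan is to follow Fathi's variational approach to the weak KAM theorem, adapted to the present setting where Theorem \ref{continuity} supplies continuity of the cost $c_t$ but the Hamiltonian $H$ lacks fiberwise strict convexity and coercivity. First I would introduce the Lax--Oleinik semigroup on $C(M)$,
\[
 (T_t f)(x) \;=\; \inf_{y \in M} \bigl[\, f(y) + c_t(y, x) \,\bigr], \qquad t > 0,
\]
and record its elementary properties: by Theorem \ref{continuity} and the compactness of $M$, the infimum is attained and $T_t f \in C(M)$; the semigroup identity $T_{s+t} = T_s \circ T_t$ follows from the dynamic programming inequality $c_{s+t}(x,z) \le c_s(x,y) + c_t(y,z)$ applied to concatenations of admissible paths for (\ref{controlaff}); and the operators $T_t$ are monotone, commute with the addition of constants, and are $1$-Lipschitz for the sup norm.

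Second, I would define the candidate critical constant by
\[
 h \;=\; -\lim_{T \to \infty} \frac{1}{T}\, \min_{x, y \in M} c_T(x, y),
\]
the existence of the limit following from Fekete's lemma applied to the subadditive function $T \mapsto \min_{x,y} c_T(x, y)$. The goal is then to construct $u \in C(M)$ satisfying $T_t u = u + h t$ for every $t \ge 0$. I would obtain $u$ as a fixed point of the maps $f \mapsto T_t f - h t$ on the quotient space $C(M)/\Real$, via the Schauder--Tychonoff theorem applied to a closed convex invariant subset. Producing such a compact invariant set requires a uniform modulus of continuity on the normalized orbits $\{T_t f - h t - \min(T_t f - h t)\}_{t \ge 1}$, and this equicontinuity step is the main obstacle: because $H$ is not coercive, the usual equi-Lipschitz bound on Lax--Oleinik orbits is unavailable. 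Instead I would derive the needed equicontinuity from the joint continuity of $(t, x, y) \mapsto c_t(x, y)$ granted by Theorem \ref{continuity}, combined with a sub-Riemannian H\"older-type bound $c_\tau(y, x) \le C\, d(x, y)^{2/3}$ for small $\tau$, which is available because the family $\{X_1, \dots, X_n\}$ is $3$-generating.

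Third, once such a fixed point $u$ is in hand, I would verify that $u$ is a viscosity solution of (\ref{sHJB}). For the subsolution property the fixed-point equation rearranges to $u(x) - u(y) \le c_t(y, x) - h t$ for every $y$, $t > 0$. Given $p \in d^- u_x$ with test function $\phi$ touching $u$ from above at $x$, I would take an admissible trajectory of (\ref{controlaff}) with constant control $w$ ending at $x$ in time $t$, use $\phi \ge u$ to bound $\phi(x) - \phi(y)$ by the same quantity, divide by $t$, and let $t \downarrow 0$ to obtain $p(F(x, w)) - L(x, w) \le -h$ for every $w$, whence $H(x, p) \le -h$. The super-solution inequality at $p \in d^+ u_x$ is obtained dually by selecting an (approximately) optimal trajectory realizing the infimum in $T_t u(x)$ and passing to the limit, producing a control $w^*$ with $p(F(x, w^*)) - L(x, w^*) \ge -h$. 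Finally, uniqueness of $h$ is forced by the formula that defines it: two distinct constants admitting viscosity solutions would each have to agree with $-\lim_{T \to \infty} T^{-1} \min_{x,y} c_T(x, y)$, a contradiction.
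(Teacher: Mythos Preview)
Your overall strategy---Lax--Oleinik semigroup, production of a fixed point of $T_t - ht$, then verification of the viscosity property---is the paper's as well. The paper, however, builds the fixed point by Fathi's monotone scheme rather than Schauder--Tychonoff: starting from any bounded $f$ it sets $\bar f := \inf_{t\ge a}(S_t f - ht)$, checks $S_t\bar f - ht \ge \bar f$, and uses monotonicity of $S_t$ to conclude that $t\mapsto S_t\bar f - ht$ is nondecreasing and bounded, hence uniformly convergent to the desired fixed point. This sidesteps the quotient $C(M)/\Real$ and any abstract fixed-point theorem. The viscosity property is then read off from the fact that $(t,x)\mapsto S_t f(x)$ solves the time-dependent equation $\partial_t g + H(x,\partial_x g)=0$, rather than argued directly as you propose.

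The real gap in your plan is the equicontinuity step. The H\"older bound $c_\tau(y,x)\le C\,d(x,y)^{2/3}$ for small $\tau$ cannot hold under the hypotheses of Theorem~\ref{continuity}: since $L\ge K_1>0$ one has $c_\tau(x,x)\ge K_1\tau>0$ for every $\tau>0$, so the inequality fails already at $y=x$; the drift $X_0$ in any case rules out a bound uniform in small $\tau$. The paper's substitute (its Theorem~4.3) is a purely soft argument that you can transplant directly: uniform continuity of $(t,x,y)\mapsto c_t(x,y)$ on a slab $[a,b]\times M\times M$ with $b>2a$ is bootstrapped to equicontinuity of $\{c_t:t\ge a\}$ by partitioning any $t\ge a$ into pieces of length in $(a,b)$ and comparing $c_t(x_1,y_1)$ with an optimal chain $x_2=z_0,z_1,\dots,z_l=y_2$ realising $c_t(x_2,y_2)=\sum_i c_{t_i-t_{i-1}}(z_{i-1},z_i)$; only the two end pieces differ, and those are controlled by the compact-slab modulus. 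The same argument gives $|c_t(x,y)-ht|\le K$, which in turn yields both the uniform boundedness of $\{S_tf-ht:t\ge a\}$ and the uniqueness of $h$. (Incidentally, your formula for $h$ carries the wrong sign: the fixed-point identity $T_t u = u + ht$ together with $c_t\sim ht$ forces $h=\lim_{T\to\infty}T^{-1}\min_{x,y}c_T(x,y)$, not its negative.)
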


The structure of this paper is as follows. In Section 2, we give a counter example showing that the 3-generating condition in Theorem \ref{continuity} is essential. Section 3 and Section 4 are devoted to the proof of Theorem \ref{continuity} and \ref{wKAM}, respectively. In Section 5, we study a generalization of the Aubry-Mather problem to the present setting.

\

\section{Example}

Assume that $M$ is two-dimensional and the control system has the
form:
\[
\dot x_1=u_1, \quad \dot x_2=x_1^2+u_2x_1^k,
\]
in some local coordinate chart.
%\[
%\begin{split}
 %(\dot x_1(t),\dot x_2(t))&=(0,(x_1(t))^2)+u_1(t)(1,0)+u_2(t)(0,(x_1(t))^3) \\& =(u_1(t),(x_1(t))^2+u_2(t)(x_1(t))^3).
%\end{split}
%\]
The family of vector fields $X_1(x_1,x_2)=(1,0)$ and $X_2(x_1,x_2)=(0,x_1^k)$ is $(k+1)$-generating but not $k$-generating. In this section, we show that the cost function $c_1$ corresponding to the Lagrangian $L(x,u)=\frac{1}{2}(u_1^2+u_2^2)$ is not continuous if $k\geq 3$. This shows that the 3-generating assumption in Theorem \ref{continuity} is essential. More precisely,

\begin{prop}
Assume that $k\geq 3$. Then the cost function $c_1$ corresponding to the above control system and Lagrangian satisfies
\[
c_1((0,w),(0,w))=0,\quad c_1((0,w),(0,z))\geq K
\]
for some constant $K>0$, all $w$, and all $z<0$.
In particular, the cost function $c_1$ is not continuous. 
\end{prop}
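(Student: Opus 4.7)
The plan is to treat the two assertions separately. The equality $c_1((0,w),(0,w))=0$ follows by choosing $u_1\equiv u_2\equiv 0$: then $x_1\equiv 0$ forces $\dot x_2\equiv 0$, so the trajectory stays at $(0,w)$ with zero cost, and since $L=\frac{1}{2}(u_1^2+u_2^2)\geq 0$ the infimum cannot be negative.

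For the lower bound I would restrict to $w\geq 0$, which is already enough to witness the discontinuity of $c_1$ at the origin. Fix any admissible pair $(x(\cdot),u(\cdot))$ joining $(0,w)$ to $(0,z)$ with $z<0$, and set
\[
V:=\int_0^1 u_1^2\,dt,\qquad W:=\int_0^1 u_2^2\,dt,\qquad A:=\int_0^1 x_1^2\,dt,
\]
so that the action equals $(V+W)/2$. The first key step is to control the sup-norm of $x_1$: from $x_1(0)=x_1(1)=0$ one has $x_1(t)^2=2\int_0^t x_1u_1\,ds$, and Cauchy--Schwarz yields $\|x_1\|_\infty^2\leq 2\sqrt{AV}$, hence $\int_0^1 x_1^{2k}\,dt\leq\|x_1\|_\infty^{2(k-1)}A\leq 2^{k-1}A^{(k+1)/2}V^{(k-1)/2}$. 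Next I would integrate the $x_2$-equation to get $\int_0^1 u_2 x_1^k\,dt=z-w-A$ and apply Cauchy--Schwarz together with the previous bound to obtain
\[
(A+w-z)^2\leq 2^{k-1}A^{(k+1)/2}V^{(k-1)/2}W.
\]
Since $w\geq 0>z$ the left-hand side is at least $A^2$, and dividing by $A^{(k+1)/2}$ (with $A>0$, since $A=0$ would force $x_1\equiv 0$ and hence $z=w$) gives $A^{(3-k)/2}\leq 2^{k-1}V^{(k-1)/2}W$.

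The third ingredient is Wirtinger's inequality $A\leq V/\pi^2$ for functions vanishing at both endpoints. Since $(3-k)/2\leq 0$ this yields $A^{(3-k)/2}\geq \pi^{k-3}V^{(3-k)/2}$, and combining with the previous display gives
\[
V^{k-2}W\;\geq\;\frac{\pi^{k-3}}{2^{k-1}}.
\]
A weighted AM--GM inequality (the maximum of $V^{k-2}W$ subject to $V+W=S$ is $(k-2)^{k-2}S^{k-1}/(k-1)^{k-1}$) then forces $V+W$ to exceed a strictly positive constant $K(k)$ depending only on $k$, yielding $c_1((0,w),(0,z))\geq K(k)/2$.

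The main obstacle is that plain Cauchy--Schwarz alone only produces a bound of the form $c_1\gtrsim|z|^{2/(k+1)}$, which vanishes as $z\to 0^-$ and therefore cannot witness a discontinuity. The essential extra input is Wirtinger's inequality: it prevents a low-cost trajectory from having $A$ much smaller than $V$, and hence forbids the ``cheap'' strategy of compensating a tiny $A$ by a very large $W$ in order to drive the integrand $u_2 x_1^k$ sufficiently negative. This is the precise mechanism by which the positive drift $x_1^2$ obstructs the $(k+1)$-step bracket-generating structure from producing a continuous cost once $k\geq 3$.
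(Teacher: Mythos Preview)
Your argument is correct. One small remark: the restriction to $w\geq 0$ is unnecessary, since the only place you use it is to ensure $(A+w-z)^2\geq A^2$, and for this all you need is $w-z>0$. Hence your proof in fact gives the uniform lower bound for all $z<w$, which is precisely what the paper asserts (the condition ``$z<0$'' in the statement should be read as ``$z<w$'', as is clear from the paper's own proof).

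Your route is genuinely different from the paper's. The paper argues via the Pontryagin maximum principle: it first observes that the only abnormal extremals are the constant trajectories $x_1\equiv 0$, so a minimizer joining $(0,w)$ to $(0,z)$ with $z<w$ must be normal; it then exploits the conserved quantities $p_2$ and $H=\frac12 p_1^2+\frac12 x_1^{2k}p_2^2+x_1^2 p_2$ to analyze the phase portrait in the $(x_1,p_1)$-plane, bounding the action from below by the area enclosed by the level set $H=0$ and arriving at
\[
c_1((0,w),(0,z))\;\geq\; C\,\max\Bigl\{(-p_2)^{-\frac{1}{2k-2}},\,(-p_2)^{\frac{k-3}{2k-2}}\Bigr\},
\]
which is bounded below uniformly in $p_2<0$ exactly when $k\geq 3$. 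Your proof, by contrast, is purely elementary and variational: Cauchy--Schwarz, Wirtinger, and AM--GM applied directly to an arbitrary admissible trajectory. It avoids PMP and the existence of minimizers entirely, produces explicit constants, and makes transparent the role of $k\geq 3$ through the sign of the exponent $(3-k)/2$. The paper's approach, on the other hand, explains \emph{why} these are the critical points for continuity (they are the endpoints of abnormal extremals) and connects the example to the general structure behind Theorem~\ref{continuity}.
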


\begin{proof}
According to the result in \cite{CaRi}, the cost function $c_1$ is much better than continuous (in fact semiconcave) at $(x,y)$ if the points $x$ and $y$ are not connected by abnormal minimizers (see \cite{AgSa} or below for the definitions of normal and abnormal minimizers). Therefore, let us apply Pontryagin maximum principle and find candidates for which the cost function $c_1$ is not continuous. 

Let $H_u^\nu$ be the Hamiltonian function defined by
\[
H_u^\nu(x,p)=p(F(x,u))+\nu L(x,u).
\]

By applying Pontryagin maximum principle (see, for instance, \cite{AgSa}), any minimizer $(x(\cdot),u(\cdot))$ of the minimization problem in (\ref{controlcost}) satisfies
\begin{equation}\label{Pontry}
\dot x_i=\frac{\partial H_u^\nu}{\partial p_i},\quad \dot p_i=-\frac{\partial H_u^\nu}{\partial x_i},\quad \frac{\partial H_u^\nu}{\partial u_i}=0
\end{equation}
for some curve $p(\cdot)$ and some constant $\nu$ such that $(\nu,p(t))\neq 0$. Moreover, $\nu$ can be chosen to be either $0$ or $-1$. A minimizer $(x(\cdot), u(\cdot))$ is abnormal if the corresponding $\nu$ in the Pontryagin maximum principle is $0$. It is normal if $\nu=-1$. Note that a minimizer can both be normal and abnormal. 

In the present case, the Hamiltonian $H_u^\nu$ is given by
\[
H_u^\nu(x,p)=p_1u_1+p_2x_1^2+p_2u_2x_1^k+\frac{\nu}{2}(u_1^2+u_2^2).
\]

For the abnormal case $\nu=0$, (\ref{Pontry}) becomes 
\[
\dot x_1=u_1,\quad \dot x_2=x_1^2+u_2x_1^k, \quad \dot p_2=0, \quad p_1=0, \quad p_2x_1^k=0. 
\]
By Pontryagin maximum principle, $p_1$ and $p_2$ cannot be equal to zero simultaneously since $\nu=0$. It follows that $x_1\equiv 0$ and $x_2\equiv x_2(0)$. The corresponding controls to all these paths are all given by the zero control $u\equiv 0$. It follows that $c_1((0,w),(0,w))=0$ for any $w$ and these are candidates for discontinuities of the cost $c_1$.

Next, we show that $c_1((0,w),(0,z))\geq K$ for some constant $K>0$ and for all $z<w$. For this, we consider the case $\nu=-1$. In this case, the Hamiltonian is given by
\[
H_u^{-1}(x,p)=p_1u_1+p_2x_1^2+p_2u_2x_1^k-\frac{1}{2}(u_1^2+u_2^2).
\]
It follows from (\ref{Pontry}) that we have
\begin{equation}\label{usePontry}
\begin{split}
&H:=H^{-1}_u(x,p)=\frac{1}{2}p_1^2 +\frac{1}{2}x_1^{2k}p_2^2+x_1^2p_2,\quad
\\&\dot x_1=p_1, \quad \dot p_2=0, \quad u_1=p_1,\quad u_2=x_1^kp_2.
\end{split}
\end{equation}

If we assume that $x(0)=(0,w)$ and $x(1)=(0,z)$ with $z<w$, then it follows from (\ref{usePontry}) that the cost $c_1((0,w),(0,z))$ for going from $(0,w)$ to $(0,z)$ is estimated by
\begin{equation}\label{costest}
\begin{split}
c_1((0,w),(0,z)) &=\frac{1}{2}\int_0^1p_1^2+x_1^{2k}p_2^2dt\geq \frac{1}{2}\int_0^1p_1^2dt.
\end{split}
\end{equation}

Since $p_2$ is a constant of motion, we can fix $p_2$ and look at the phase portrait of the system
\[
\dot x_1=p_1,\quad\dot p_1=-kx_1^{2k-1}p_2^2-2x_1p_2
\]
(see Figure 1). The cost $c((0,w),(0,z))$ in (\ref{costest}) can be estimated from below by the area enclosed by the level set $H=0$. More precisely, 
\begin{equation}\label{costest2}
c((0,w),(0,z)) \geq \int_0^{\kappa}p_1(x_1,p_2)dx_1,
\end{equation}
where $p_1(x,p_2)$ is defined implicitly by $\frac{1}{2}p_1^2 +\frac{1}{2}x_1^{2k}p_2^2+x_1^2p_2=0$ and $\kappa=\left(-\frac{2}{p_2}\right)^{\frac{1}{2k-2}}$ is the positive zero of the function $p_1(x_1,p_2)$. Note that $p_2<0$. Indeed, $H(x(t),p(t))$ is constant, we have $H(x(t),p(t))=H^{-1}_u(x(0),p(0))\geq 0$. It follows that $p_2(t)=p_2(0)\leq 0$. 

\begin{figure}[ht!]\label{phase}
\input epsf
\centerline{\epsfysize=0.5\vsize\epsffile{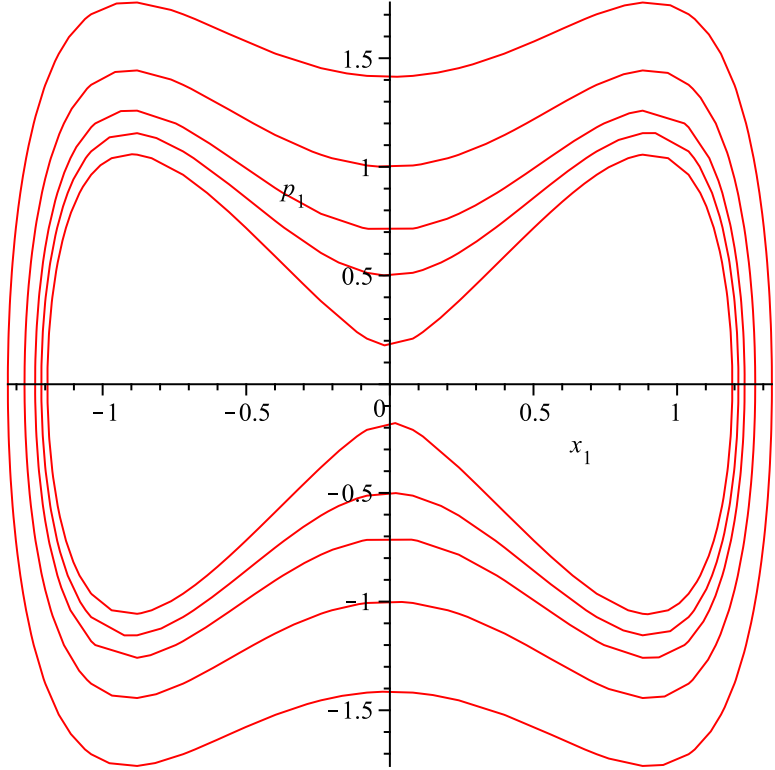}}
\caption{Phase portrait of the example (level sets of the Hamiltonian $H$)}
\end{figure}

If we do a change of variable $x_1=\kappa z$, then we have
\begin{equation}\label{costest3}
\begin{split}
\int_0^\kappa p_1(x_1,p_2)dx_1 & \geq \int_0^\kappa\left(-2 x_1^2p_2 -x_1^{2k}p_2^2\right)^{\frac{1}{2}}dx_1
\\& =2^{\frac{k+1}{2k-2}}\left(-p_2\right)^{\frac{k-3}{2k-2}}\int_0^1(z^2-z^{2k})^{\frac{1}{2}}dz.
\end{split}
\end{equation}

On the other hand, by Figure 1 and (\ref{usePontry}), we have 
\begin{equation}\label{costest4}
\frac{1}{2}\int_0^1p_1^2dt\geq \frac{1}{2}\int_0^1|p_1|dt\geq \frac{1}{2}\int_{\{t|\dot x_1(t)\geq 0\}}\dot x_1dt\geq \frac{1}{2}\left(-\frac{2}{p_2}\right)^{\frac{1}{2k-2}}.
\end{equation}

If we combine  (\ref{costest}), (\ref{costest2}), (\ref{costest3}), and (\ref{costest4}), then we get
\[
c((0,w),(0,z))\geq C \max\left\{\left(-\frac{1}{p_2}\right)^{\frac{1}{2k-2}},\left(-p_2\right)^{\frac{k-3}{2k-2}}\right\}
\] 
for some constant $C>0$. 

It follows that the cost $c((0,w),(0,z))$ is bounded below by a positive constant independent of $p_2$ if $k\geq 3$ and this finishes the proof of the result. 
\end{proof}

\

\section{Continuity of Optimal Control Costs}

In this section, we will state and prove the general continuity result (Theorem \ref{morecontinuity}) mentioned in the introduction. To do this, let us introduce some notations. If $X_t$ is a, possibly time-dependent, vector field, then the corresponding flow $\varphi_t$ defined by $\varphi_0(x)=x$ and $\ddt\varphi_t(x)=X_t(\varphi_t(x))$ is denoted by
\[
\varphi_t=\flow 0 tX_sds.
\]

We define the endpoint map $End_{x_0}^T:L^p([0,T],U)\to M$ by
\[
 End_{x_0}^T(u(\cdot))=\flow 0 T F_{u(s)}ds(x_0),
\]
where $F_u$ is the vector field defined by $F_u(x)=F(x,u)=X_0(x)+\sum_{i=1}^nu_iX_i(x)$.

Let us first fixed a control $u(\cdot)$. The first goal is to show that the control system is locally controllable. It means that we can reach any point near the point $End_{x_0}^T(u(\cdot))$ by adding a small control $v(\cdot)$ to the fixed one $u(\cdot)$. The first idea is to replace the control system (\ref{controlaff}) with drift $X_0$ by one without drift. However, the control vector fields $X_1,...,X_n$ will become time dependent in the new control system. This is accomplished in Lemma \ref{time-dep-linear}. Recall that if $P:M\to M$ is a diffeomorphism and $X$ is a vector field on $M$, then the pull back vector field $P^*X$ is the vector field defined by $P^*X=dP^{-1}(X\circ P)$. 

\begin{lem}\label{time-dep-linear}
Let $g_i^t$ be the time-dependent vector field defined by
\[
g_i^t:=\left(\flow 0 t F_{u(s)} ds\right)^*X_i.
\]
Then
\[
 End^T_{x_0}(u(\cdot)+v(\cdot))=\flow 0 T F_{u(t)}dt\circ \flow 0 T \sum_{i=1}^n v_i(t)g_i^tdt(x_0).
\]
\end{lem}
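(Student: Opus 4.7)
The plan is to verify the identity by differentiating the ansatz $x(t)=\Phi_t(y(t))$, where $\Phi_t:=\flow 0 t F_{u(s)}ds$ is the flow of the unperturbed system and $y(t)$ is to be determined. This is a variation-of-parameters argument that trades the drift $X_0$ for time dependence in the controlled vector fields.

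First, I would set $x(t):=\flow 0 t F_{u(s)+v(s)}ds(x_0)$, the solution of the perturbed system, so that $End^T_{x_0}(u(\cdot)+v(\cdot))=x(T)$. Then I would define $y(t):=\Phi_t^{-1}(x(t))$, noting that $y(0)=x_0$. The goal becomes showing that $y(\cdot)$ is precisely the flow line of the time-dependent vector field $\sum_{i=1}^n v_i(t)g_i^t$ starting at $x_0$; the lemma then follows by applying $\Phi_T$ to $y(T)$.

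The core computation is a chain-rule differentiation of $x(t)=\Phi_t(y(t))$. On the one hand,
\[
\dot x(t)=F_{u(t)}(x(t))+\sum_{i=1}^n v_i(t)X_i(x(t)).
\]
On the other hand,
\[
\frac{d}{dt}\Phi_t(y(t))=F_{u(t)}(\Phi_t(y(t)))+d\Phi_t\big|_{y(t)}(\dot y(t)).
\]
Equating the two expressions and cancelling the drift terms (they agree since $\Phi_t(y(t))=x(t)$), I would apply $d\Phi_t^{-1}|_{x(t)}$ to both sides. By the definition of the pullback vector field, $d\Phi_t^{-1}(X_i\circ \Phi_t)=\Phi_t^*X_i=g_i^t$, so this yields
\[
\dot y(t)=\sum_{i=1}^n v_i(t)\,g_i^t(y(t)).
\]

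Finally, since $y(0)=x_0$, uniqueness of ODE solutions (with the usual essentially-bounded measurable $v$, so in the Carathéodory sense) identifies $y(T)=\flow 0 T \sum_{i=1}^n v_i(t)g_i^t dt(x_0)$, and therefore $End^T_{x_0}(u(\cdot)+v(\cdot))=x(T)=\Phi_T(y(T))$ is exactly the claimed composition. The only nontrivial point is making the chain-rule step rigorous for Lipschitz $x(t)$ and merely measurable $v(t)$, but this is a routine Carathéodory/ODE verification and not a genuine obstacle.
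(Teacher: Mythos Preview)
Your argument is correct and is essentially the same variation-of-parameters computation as in the paper: the paper writes $Q_t=R_t\circ P_t$ for the flows and differentiates to obtain $\dot P_t=(R_t)^*\!\left(\sum_i v_i X_i\right)\circ P_t$, while you do the identical calculation pointwise with $x(t)=\Phi_t(y(t))$ and $y(t)=P_t(x_0)$. The only cosmetic difference is flow-level versus trajectory-level notation.
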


\begin{proof}
Let $Q_t$ and $R_t$ be the flows $\flow 0 t F_{u(s)+v(s)}ds$ and $\flow 0 t F_{u(s)}ds$, respectively. Let $P_t$ be the flow defined by $Q_t=R_t\circ P_t$. If we differentiate the above equation, then we get
\[
F_{u(t)+v(t)}\circ Q_t=\dot Q_t=\dot R_t\circ P_t+dR_t(\dot P_t)=F_{u(t)}\circ Q_t+dR_t(\dot P_t).
\]
After simplifying the above equation, we get $\dot P_t=dR_t^{-1}(F_{v(t)}\circ Q_t)=(R_t)^*F_{v(t)}\circ P_t$ and this completes the proof.
\end{proof}

Recall that we want to show local controllability by varying $v(\cdot)$. In Lemma \ref{time-dep-linear}, we have decompose the endpoint map $End_{x_0}(u(\cdot)+v(\cdot))$ into two parts.  The first part $\flow 0 T F_{u(t)}dt$ is independent of the varying control $v(\cdot)$ and it is a diffeomorphism. Therefore, it is enough to show local controllability for the second term $\flow 0 T \sum_{i=1}^n v_i(t)g_i^tdt(x_0)$ which is the endpoint map to a new control system 
\begin{equation}\label{newcontrol}
\dot x=\sum_{i=1}^n v_i(t)g_i^t.
\end{equation}
Note that this is a system with no drift but with time dependent control vector fields $g_i^t$ as mentioned earlier. 

Before proceeding to the proof of local controllability of the system (\ref{newcontrol}), let us state the main result of this section which includes Theorem \ref{continuity} as a corollary.

\begin{thm}\label{morecontinuity}
Assume that the Lagrangian $L$ and the family of vector fields
\[
\{g_1^t,...,g_n^t|t\in [0,T]\}
\]
satisfy the following conditions:
\begin{enumerate}
\item $C_1|u|^q+K_1\leq L(x,u)\leq C_2|u|^p+K_2$,
\item $|\frac{\partial L(x,u)}{\partial x}|\leq C_3|u|^2$,
\item  the Hessian of $L$ in the $u$ variable is positive definite, and
\item $\{g_1^t,...,g_n^t \mid t\in [0,T]\}$ is $k$-generating, $\forall u(\cdot)$,
\end{enumerate}
for some constants $C_1,C_2,C_3,K_1,K_2>0$ and some constant $q>1$. Suppose further that one of the followings is satisfied:
\begin{enumerate}
 \item $k=3$ and $p\leq 2$, or
\item $k>3$, $p<\frac{k-2}{k-3}$.
\end{enumerate}
Then the cost function $(t,x,y)\mapsto c_t(x,y)$ defined in (\ref{controlcost}) is continuous.
\end{thm}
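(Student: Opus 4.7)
The plan is to establish continuity of $(t,x,y)\mapsto c_t(x,y)$ by proving lower and upper semicontinuity separately. The lower estimate is a standard direct-method argument relying on the $q$-growth from below and the convexity of $L$ in $u$; the upper estimate is where the sharp hypotheses $p\leq 2$ (for $k=3$) and $p<(k-2)/(k-3)$ (for $k>3$) enter, through a quantitative local controllability construction built on Lemma \ref{time-dep-linear}.

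For lower semicontinuity, I would take $(t_n,x_n,y_n)\to(t,x,y)$ with $\liminf c_{t_n}(x_n,y_n)<\infty$ and pick near-minimizers $u_n$ on $[0,t_n]$. The growth bound $L\geq C_1|u|^q+K_1$ with $q>1$ gives a uniform $L^q$-bound on the $u_n$, extended by zero to a common interval, so after extraction $u_n\rightharpoonup u$ weakly in $L^q$. A Gronwall argument adapted to weak $L^q$-convergence of inputs in an affine system yields uniform convergence of trajectories $x_n(\cdot)\to x(\cdot)$, so $u$ is admissible with $x(0)=x$ and $x(t)=y$. Convexity of $L$ in $u$ combined with Tonelli--Serrin semicontinuity then gives $\int_0^t L(x,u)\,ds\leq\liminf\int_0^{t_n}L(x_n,u_n)\,ds$, and hence $c_t(x,y)\leq\liminf c_{t_n}(x_n,y_n)$.

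For upper semicontinuity, fix a near-minimizing control $u$ for $c_t(x,y)$ and construct for each $n$ a control $\tilde u_n$ on $[0,t_n]$ joining $x_n$ to $y_n$ with cost at most $\int L(x,u)\,ds+o(1)$. The interval $[0,t_n]$ is split into two short end windows of length $\varepsilon=\varepsilon_n\to 0$, on which $\tilde u_n=u+v_n$, and a central window on which $\tilde u_n$ is a mild reparametrization of $u$ (the time mismatch $|t_n-t|\to 0$ is absorbed here). On each end window the perturbation $v_n$ is produced by Hermes-type iterated-bracket combinations of the driftless time-dependent system $\dot x=\sum v_i\,g_i^t$ of Lemma \ref{time-dep-linear}: since $\{g_i^t\}$ is $k$-generating, one can reach a point of Euclidean distance $\delta_n\to 0$ from the base endpoint with $v_n$ of amplitude of order $\delta_n^{1/k}/\varepsilon$. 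Two contributions then have to tend to zero simultaneously: the direct excess cost on the end windows, bounded via the upper growth $L\leq C_2|u|^p+K_2$ by $O(\varepsilon+\delta_n^{p/k}\varepsilon^{1-p})$, and the cross contribution coming from the shift of the base trajectory on the central window, bounded via $|\partial_xL|\leq C_3|u|^2$ and Gronwall by $O(\delta_n^{1+2/k}/\varepsilon)+O(\delta_n)$. Choosing $\varepsilon=\varepsilon(\delta_n)$ to kill both terms is precisely where the stated conditions on $(k,p)$ become necessary, and the example in Section 2 shows that they are essentially sharp.

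The main obstacle I anticipate is producing the quantitative bracket-generating perturbation with tight $L^p$-bounds along the time-dependent family $\{g_i^t\}$: the classical Hermes/ball-box construction delivers the right amplitude scaling in the autonomous setting, but here the vector fields depend on both $t$ and on the base control $u$, and the perturbation must be synthesized with estimates uniform in these data. Once this quantitative local controllability is in place, assembling the estimates above yields $\limsup c_{t_n}(x_n,y_n)\leq c_t(x,y)+o(1)$, which combined with the lower estimate completes the proof.
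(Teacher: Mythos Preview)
Your overall architecture---lower semicontinuity via weak $L^q$-compactness and convexity, upper semicontinuity via a quantitative local-controllability correction built on the driftless system of Lemma~\ref{time-dep-linear}---matches the paper's. The gap is in the scaling analysis: the two terms you list do \emph{not} produce the hypotheses on $(k,p)$. With window length $\varepsilon$ and amplitude $A\sim\delta^{1/k}/\varepsilon$, your ``direct excess'' $A^p\varepsilon=\delta^{p/k}\varepsilon^{1-p}$ and your ``cross contribution'' $\delta^{1+2/k}/\varepsilon$ each impose only a \emph{lower} bound on $\varepsilon$ (namely $\varepsilon\gg\delta^{p/(k(p-1))}$ and $\varepsilon\gg\delta^{(k+2)/k}$). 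Since $\delta\to 0$, both lower bounds tend to zero and can always be met with $\varepsilon\to 0$; no relation between $p$ and $k$ is forced. So the sentence ``choosing $\varepsilon=\varepsilon(\delta_n)$ to kill both terms is precisely where the stated conditions on $(k,p)$ become necessary'' is not correct as written.

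The genuine source of the restriction is the issue you correctly flag as ``the main obstacle'' but never quantify: the time-dependence of $g_i^t$. In the chronological expansion of the correction flow, replacing $g_i^{\tau+s}$ by the frozen field $g_i^{\tau}$ on a window of length $\varepsilon$ produces, at order $j$, an error of size $O\big((A\varepsilon)^j\cdot\varepsilon\big)$. The first-order error $O(A\varepsilon^2)$ is in general \emph{larger} than the desired $k$-bracket displacement $(A\varepsilon)^k$; the paper kills it by inserting an additional mean-zero component $\alpha^\varepsilon$ into the control on each constant piece (Lemma~\ref{I1}), exploiting the special structure of the interval $\mathcal I$. The second-order error $O(A^2\varepsilon^3)$ cannot be cancelled this way, and demanding $A^2\varepsilon^3\ll (A\varepsilon)^k$ together with $A^p\varepsilon\to 0$ is exactly what forces $p<(k-2)/(k-3)$ when $k>3$; the $L^2$ nature of the cancellation mechanism in Lemma~\ref{I1} is what brings in $p\le 2$. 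Your autonomous ball-box heuristic is fine, but without this time-dependence bookkeeping and the first-order cancellation trick the upper-semicontinuity argument does not close, and the role of the sharp hypotheses remains invisible.
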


Going back to the local controllability issue of the system (\ref{newcontrol}), let us denote the endpoint map to the new system by  $\Phi^T:L^p([0,T],U)\times M\to M$. More precisely, 
\[
 \Phi^T(v(\cdot),x):=\flow 0 T \sum_{i=1}^n v_i(t)g_i^tdt(x).
\]

If the control vector fields $g_i^t$ in the above new system is time independent, then local controllability follows from the Chow-Rashevskii theorem (see for instance \cite{Mo}). More precisely, we will need the following lemma for which the proof will be given for completeness. Recall that if $X,Y$ are two vector fields, then the vector field $ad_XY$ is defined by $ad_XY=[X,Y]$. 

\begin{lem}\label{Chow}
Let $g_1,...,g_n$ which are time-independent family of vector fields. Then there exists piecewise constant control $w(\cdot)$ for which $w(t)$ has only one nonzero component for each $t$ and such that
\begin{equation}\label{Chow-1}
\begin{split}
 & f\left(\flow 0 T \e\left(\sum_{i=1}^nw_i(t)g_i\right) dt(x_0))\right)\\&=f(x_0)+\e^k(ad_{g_1}...ad_{g_{k-1}}g_{k}) f(x_0)+o(\e^k)
 \end{split}
\end{equation}
as $\e\to 0$ for every smooth function $f$. 
\end{lem}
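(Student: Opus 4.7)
The plan is to proceed by induction on the depth $k$ of the iterated bracket, using the classical approximation of Lie brackets by group-theoretic commutators of flows. For the base case $k=1$, I would simply take $w(t)=e_k$ (the $k$-th standard basis vector of $\Real^n$) on a subinterval of unit length and $w\equiv 0$ elsewhere; then the resulting flow is $\exp(\e g_k)$, and a direct Taylor expansion gives $f(\exp(\e g_k)(x_0))=f(x_0)+\e(g_k f)(x_0)+o(\e)$, which is (\ref{Chow-1}) for $k=1$.

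For the inductive step, suppose the lemma holds for iterated brackets of depth $k-1$. Applying the hypothesis to the family $g_2,\ldots,g_k$, one obtains a piecewise constant control $\tilde w$, defined on some subinterval of $[0,T]$ and having at most one nonzero component at each instant, whose associated flow $\Psi_\e$ satisfies
\[
f(\Psi_\e(x))=f(x)+\e^{k-1}(\text{ad}_{g_2}\cdots \text{ad}_{g_{k-1}} g_k) f(x)+o(\e^{k-1}).
\]
I would then build $w$ on $[0,T]$ by concatenating four blocks, rescaling time so they all fit in $[0,T]$: (i) flow for one unit along $\e g_1$; (ii) execute $\tilde w$ to realize $\Psi_\e$; (iii) flow for one unit along $-\e g_1$; (iv) execute the time-reversed, sign-negated $\tilde w$ to realize $\Psi_\e^{-1}$. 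Each block activates only one coordinate of the control, so $w$ does too. The total flow at $x_0$ is then the group commutator $\Psi_\e^{-1}\circ\exp(-\e g_1)\circ\Psi_\e\circ\exp(\e g_1)(x_0)$.

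To extract the $\e^k$ coefficient I would appeal to the Baker--Campbell--Hausdorff expansion (or equivalently to the chronological calculus of Agrachev--Gamkrelidze): the formal logarithm of $\exp(\e g_1)$ is $\e g_1$ at order $\e$, while the formal logarithm of $\Psi_\e$ has leading part $\e^{k-1} Y$ with $Y=\text{ad}_{g_2}\cdots\text{ad}_{g_{k-1}} g_k$, so the logarithm of the commutator has leading term $\e\cdot\e^{k-1}[g_1,Y]=\e^k\,\text{ad}_{g_1}\cdots\text{ad}_{g_{k-1}} g_k$. The remaining BCH contributions---multi-brackets like $[g_1,[g_1,Y]]$ and $[Y,[g_1,Y]]$---are of orders $\e^{k+1}$ and $\e^{2k-1}$ respectively, both $o(\e^k)$, so (\ref{Chow-1}) follows.

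The main obstacle I foresee is the error bookkeeping: the inductive hypothesis delivers only $o(\e^{k-1})$ control on $\Psi_\e$, whereas the conclusion demands an $o(\e^k)$ estimate on the commutator, which is one order smaller than the error supplied by the hypothesis. The natural way around this is to strengthen the induction to yield a full Taylor-type expansion of the flow $\Psi_\e$, acting as an operator on smooth functions, up to order $\e^k$; the sub-$\e^k$ terms from $\Psi_\e$ and $\Psi_\e^{-1}$ then cancel against each other and against $\exp(\pm\e g_1)$ by the antisymmetry built into the commutator word, leaving exactly the single iterated bracket as the $\e^k$-coefficient. Carrying this out cleanly is precisely where the formal chronological calculus pays off, reducing the problem to an algebraic identity in a free associative algebra on $g_1,\ldots,g_n$.
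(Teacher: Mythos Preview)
Your proposal is correct and follows the same inductive commutator-of-flows strategy as the paper. The one genuine difference is how the error bookkeeping---which you correctly flag as the crux---is dispatched. You propose either invoking BCH/chronological calculus on the formal logarithms, or strengthening the induction to carry a full Taylor-type operator expansion of $\Psi_\e$; both work but require tracking higher-order data. The paper instead uses a two-parameter trick: it writes the commutator with independent parameters $\e_1$ on $P=\exp(\e_1 g_1)$ and $\e_2$ on $Q=\Psi_{\e_2}$, sets $h(\e_1,\e_2)=f(Q\circ P\circ Q^{-1}\circ P^{-1}(x_0))$, and observes that $h(\e_1,0)=h(0,\e_2)=f(x_0)$ identically, so only genuinely mixed partials survive. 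Since the flow depends smoothly on $\e$, the inductive hypothesis already forces $\partial_{\e_2}^i h|_{\e_2=0}=0$ for $i<k$, so the first nonzero term in the diagonal expansion is $\partial_{\e_1}\partial_{\e_2}^k h|_{0,0}$, which a one-line computation identifies with the desired bracket. This avoids both BCH and any strengthened induction hypothesis; your route is more algebraic, theirs more analytic, but they land in the same place.
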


\begin{proof}
Let $P_t^\e$ and $Q_t^\e$ be the flows corresponding to the control system (\ref{controlaff}) with controls $\e w^P$ and $\e w^Q$, respectively. More precisely, 
\[
P^\e_t(x_0)=\flow 0 t\e\left(\sum_{i=1}^nw_i^P(s)g_i\right) ds, \quad Q_t^\e(x_0) =\flow 0 t\e\left(\sum_{i=1}^nw_i^Q(s)g_i\right) ds. 
\]

Moreover, assume that there are vector fields $X$ and $Y$ such that the flows $P_t^\e$ and $Q_t^\e$ satisfy 
\[
f(P_t^\e(x_0))=f(x_0)+\e Xf(x_0)+o(\e), \quad f(Q_t^\e(x_0))=f(x_0)+\e^k Yf(x_0)+o(\e^k)
\]
for all smooth functions $f$. 

Next, we define a control $\bar w$ which is the concatenation of the controls $w^P$, $w^Q$, $-w^P$, and $-w^Q$. 
\[
\bar w(t)=
\begin{cases}
-w^P\left(t\right) & \text{ if } 0\leq t\leq T\\
-w^Q\left(t-T\right) & \text{ if } T<t\leq 2T\\
w^P(t-2T) & \text{ if } 2T< t\leq 3T\\
w^Q(t-3T) & \text{ if } 3T< t\leq 4T.\\
\end{cases}
\]

It follows that 
\[
f\left(\flow 0 {4T}\e\left(\sum_{i=1}^n\bar w_i(s)g_i\right) ds(x_0)\right)=f(Q_T^\e\circ P_T^\e\circ (Q_T^\e)^{-1}\circ (P_T^\e)^{-1}(x_0)). 
\]

Let $h(\e_1,\e_2)=f(Q_T^{\e_2}\circ P_T^{\e_1}\circ(Q_T^{\e_2})^{-1}\circ (P_T^{\e_1})^{-1}(x_0))$ and we want to consider the expansion of the function $h(\e,\e)$ in the parameter $\e$. Note that $P_T^0=Q_T^0$ is the identity transformation. It follows that the zeroth order term of the expansion of $h(\e,\e)$ in $\e$ is $f(x_0)$. In fact, the following is true. 
\begin{equation}\label{Chow-2}
h(\e_1,0)=h(0,\e_2)=f(x_0). 
\end{equation}

By definition of the flow $Q_t^\e$, we have $\partial_\e^if(Q_t^\e)\Big|_{\e=0}=0$ for all $i=1,...,k-1$. It follows that $\partial_{\e_2}^ih\Big|_{\e_2=0}=0$ for each such $i$. Therefore, except the zeroth order term, any term of order less than $k$ in the expansion of $h$ vanishes. However, by (\ref{Chow-2}), the $k$-th order vanishes as well. Therefore, we consider the $(k+1)$-th order term. Moreover, by the same argument, the only nontrivial $(k+1)$-th order term is given by $\partial_{\e_1}\partial_{\e_2}^kh\Big|_{\e_1=\e_2=0}$. A computation shows the following 
\[
\partial_\e^{k+1} h(\e,\e)\Big|_{\e=0} =(k+1)\partial_{\e_1}\partial_{\e_2}^{k} h(\e_1,\e_2)\Big|_{\e_1 =\e_2=0}=(k+1)[Y,X]f(x_0). 
\]

In conclusion, we have shown that 
\[
f\left(\flow 0 {4T}\e\left(\sum_{i=1}^n\bar w_i(s)g_i\right) ds(x_0)\right) =f(x_0)+\frac{\e^{k+1}}{k!}[Y,X]f(x_0)+o(\e^{k+1}). 
\]

By rescaling time and multiplying the control $\bar w$ by a constant, we have a control $w$ which satisfies 
\begin{equation}\label{Chow-3}
f\left(\flow 0 {T}\e\left(\sum_{i=1}^n w_i(s)g_i\right) ds(x_0)\right) =f(x_0)+\e^{k+1}[Y,X]f(x_0)+o(\e^{k+1}). 
\end{equation}
Note that if the controls $w^P$ and $w^Q$ are piecewise constant and have only one nonzero component for each time $t$, then so is $w$ by construction. 

If we let the control $w^P$ and $w^Q$ be the constant controls defined by $w^P_{i}(t)=\delta_{i,i_1}$ and $w^Q_{i}(t)=\delta_{i,i_2}$ for each $t$, then (\ref{Chow-3}) shows that 
\[
f\left(\flow 0 {T}\e\left(\sum_{i=1}^n w_i(s)g_i\right) ds(x_0)\right) =f(x_0)+\e^2[g_{i_2},g_{i_1}]f(x_0)+o(\e^{2}). 
\]
This proves the lemma for the case $k=2$. The rest follows from induction using (\ref{Chow-3}). 
\end{proof}

The second idea is to take a control given by Lemma \ref{Chow}, rescale it so that it is concentrated on a smaller and smaller time interval, and put the rescaled controls to the place where the vector fields $g_1^t,...,g_n^t$ are bracket generating. This way we obtain local controllability as in Chow-Rashevskii theorem. Here we need the conditions on the numbers $k$ and $p$ to make sure that the rescaled controls stay small. This second idea will be achieved in Proposition \ref{infin} below. To do this, let us consider the curves $t\mapsto g_i^t(x_0)$ contained in the tangent space $T_{x_0}M$. Let $\mathcal I$ be an interval in $[0,T]$ with the property that any subinterval $\mathcal I'$ contained in $\mathcal I$ satisfies
\[
\text{span}\{g_1^t(x_0),...,g_n^t(x_0)|t\in \mathcal I'\}=\text{span}\{g_1^t(x_0),...,g_n^t(x_0)|t\in\mathcal I\}.
\]

\begin{prop}\label{infin}
Let $\tau$ be a Lebesgue point of the control $u(\cdot)$ contained in the interval $\mathcal I$ and assume that either
\begin{enumerate}
 \item $k=3$ and $p\leq 2$, or
\item $k>3$, $p<\frac{k-2}{k-3}$.
\end{enumerate}
Then  there exists $\alpha,\beta>0$ and a family of controls $v^\e(\cdot)$ which converges to $0$ in $L^p$ and such that
\[
f(\Phi^T(v^\e(\cdot)),x_0)=f(x_0)+\e^{k(\beta-\alpha)}\int_0^T ad_{g_{i_1}^\tau}...ad_{g_{i_{k-1}}^\tau}(g_{i_k}^\tau)f(x_0)ds+o(\e^{k(\beta-\alpha)}),
\]
as $\e\to 0$, for any smooth function $f$.
\end{prop}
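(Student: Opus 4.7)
The plan is to build $v^\e$ by rescaling, in both amplitude and time, a Chow-type control adapted to the frozen family $\{g_1^\tau,\ldots,g_n^\tau\}$. First apply Lemma~\ref{Chow} to the time-independent vector fields $g_i^\tau$ to obtain a piecewise constant $w\in L^\infty([0,T_0],\Real^n)$, with a single nonzero component at each time, satisfying
\[
f\!\left(\flow 0 {T_0}\tilde\e\sum_i w_i(s) g_i^\tau\, ds\,(x_0)\right) = f(x_0) + \tilde\e^k\,(ad_{g_{i_1}^\tau}\cdots ad_{g_{i_{k-1}}^\tau} g_{i_k}^\tau) f(x_0) + o(\tilde\e^k).
\]
Then define the concentrated, rescaled control
\[
v^\e(t) := \e^{-\alpha}\, w\!\left(\tfrac{t-\tau}{\e^\beta}\right)\chi_{[\tau,\,\tau+T_0\e^\beta]}(t),
\]
with $\alpha,\beta>0$ to be chosen. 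A direct calculation gives $\|v^\e\|_{L^p}^p = C\e^{\beta-p\alpha}$, so $v^\e\to 0$ in $L^p$ precisely when $\beta>p\alpha$.

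After the substitution $t=\tau+\e^\beta s$, the flow $\Phi^T(v^\e)$ becomes $\flow 0 {T_0}\e^{\beta-\alpha}\sum_i w_i(s) g_i^{\tau+\e^\beta s}\, ds$. If one could simply replace $g_i^{\tau+\e^\beta s}$ by $g_i^\tau$, Lemma~\ref{Chow} at amplitude $\tilde\e=\e^{\beta-\alpha}$ would give exactly the desired leading term $\e^{k(\beta-\alpha)}(ad_{g_{i_1}^\tau}\cdots g_{i_k}^\tau) f(x_0)$. To justify this replacement up to an acceptable error, I would expand $f(\Phi^T(v^\e)(x_0))$ as a Chen series in iterated integrals of $v^\e$ against $g_{i_1}^{s_1}\cdots g_{i_n}^{s_n}$, split each $g_i^{s_j}=g_i^\tau + (g_i^{s_j}-g_i^\tau)$, and estimate the resulting correction terms using that $g_i^t$ is Lipschitz in $t$ (since $u\in L^\infty$ and $\dot g_i^t$ equals the pull-back of $[F_{u(t)}, X_i]$), so $|g_i^t-g_i^\tau|\leq C|t-\tau|\leq C\e^\beta$ on $\mathrm{supp}(v^\e)$.

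Bookkeeping these estimates, the correction at the $n$-th Chen level has size $O(\e^{(n+1)\beta - n\alpha})$; the most restrictive ($n=1$) is $O(\e^{2\beta-\alpha})$. Requiring all such corrections to be $o(\e^{k(\beta-\alpha)})$ bounds $\alpha/\beta$ from below by an explicit threshold depending on $k$, while $v^\e\to 0$ in $L^p$ bounds $\alpha/\beta$ from above by $1/p$. The hypotheses $k=3,\, p\leq 2$ and $k>3,\, p<(k-2)/(k-3)$ are precisely the cases in which this window for $(\alpha,\beta)$ is nonempty, so a valid choice exists.

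The main obstacle is the sharp error analysis at higher Chen levels: the naive Lipschitz bound on $g_i^t-g_i^\tau$ alone does not recover the full announced range of $p$, so to reach the sharp threshold one must exploit additional moment cancellations built into the iterated Chow construction, combined with the refined Lebesgue-point asymptotic $g_i^{\tau+h}-g_i^\tau = h\,[F_{u(\tau)}, g_i^\tau] + o(h)$ (which follows from $\tau$ being a Lebesgue point of $u$).
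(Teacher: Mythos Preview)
Your overall architecture --- take a Chow control $w$ for the frozen family $\{g_i^\tau\}$, rescale it to $v^\e(t)=\e^{-\alpha}w((t-\tau)/\e^\beta)$, and expand $\Phi^T(v^\e)$ as a Chen series --- is exactly what the paper does, and your bookkeeping is correct: the $n$-th level correction from replacing $g_i^{\tau+\e^\beta s}$ by $g_i^\tau$ is $O(\e^{(n+1)\beta-n\alpha})$, and the binding constraint is $n=1$, i.e.\ $2\beta-\alpha>k(\beta-\alpha)$. As you notice, together with $\beta>p\alpha$ this only gives $p<(k-1)/(k-2)$, strictly weaker than the stated $p<(k-2)/(k-3)$ (and $p<2$ rather than $p\le 2$ when $k=3$).

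The gap is real, and your proposed fix does not close it. The Chow control does satisfy $\int_0^{T_0} w_i(s)\,ds=0$, but it has no reason to satisfy $\int_0^{T_0} s\,w_i(s)\,ds=0$. Hence the Lebesgue-point expansion $g_i^{\tau+\e^\beta s}-g_i^\tau=\e^\beta s\,[F_{u(\tau)},g_i^\tau]+o(\e^\beta)$ still leaves a first-order contribution of exact size $\e^\beta$ in $I_1$, so $\e^{\beta-\alpha}I_1$ stays of order $\e^{2\beta-\alpha}$ and you cannot push the bottleneck from $n=1$ to $n=2$.

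What the paper does instead is \emph{modify the control}, not estimate the error. It replaces $w$ by $v=w+\alpha^\e$, where on each constancy interval $[t_{i-1},t_i)$ of $w$ the function $\alpha^\e$ has the same single nonzero component as $w$, has zero mean there (so the frozen asymptotic~(\ref{rescaleagain}) is preserved), and is chosen so that
\[
\int_{t_{i-1}}^{t_i}\bigl(c_i+\alpha^\e_{k_i}(s)\bigr)\bigl(g_{k_i}^{\tau+\e^\beta s}-g_{k_i}^\tau\bigr)f(x_0)\,ds=0
\]
for every smooth $f$; this is a finite number of linear orthogonality conditions in $L^2([t_{i-1},t_i])$. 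Solvability (Lemma~\ref{I1}) uses precisely the span-stability hypothesis defining the interval $\mathcal I$, which your argument never invokes: it guarantees that the finite-dimensional space spanned by the components of $s\mapsto g_{k_i}^{\tau+\e^\beta s}(x_0)-g_{k_i}^\tau(x_0)$ contains no nonzero constant, so one can be orthogonal to it and to the constants simultaneously. With $I_1$ killed exactly, the first surviving correction is at level $n=2$, giving the constraint $3\beta-2\alpha>k(\beta-\alpha)$, which together with $\beta>p\alpha$ and $p\le 2$ yields the sharp range. The correction $\alpha^\e$ tends to $0$ in $L^2$, which is why the extra requirement $p\le 2$ appears.
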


\begin{proof}
By Lemma \ref{Chow}, there is a piecewise constant control $w(\cdot)$  for which $w(t)$ has only one nonzero component for each $t$ and such that
\begin{equation}\label{rescale}
\begin{split}
 & f\left(\flow 0 T \e\left(\sum_{i=1}^nw_i(t)g_i^\tau\right) dt(x_0))\right)\\&=f(x_0)+\e^k(ad_{g_{i_1}^\tau}...ad_{g_{i_{k-1}}^\tau}g_{i_k}^\tau) f(x_0)+o(\e^k)
 \end{split}
\end{equation}
as $\e\to 0$. Note that $\tau$ is fixed and $g_i^\tau$ is a time independent vector field. 

Let $0=t_0\leq t_1\leq...\leq t_l=T$ be a partition such that the restriction $w|_{[t_{i-1},t_i)}$ of the control $w(\cdot)$ to the subinterval $[t_{i-1},t_i)$ is constant and there is only one nonzero component. We suppose that the $k_i$-th component of $w|_{[t_{i-1},t_i)}$ is nonzero and this nonzero component is equal to $c_i$.

We need to create more freedom in our controls for later use (Lemma \ref{I1} to be precise). Let $v(\cdot)$ be a control of the form $v(\cdot)=w(\cdot)+\alpha(\cdot)$ such that
$\alpha_j|_{[t_i,t_{i+1})}\equiv 0$ if $j\neq k_i$ and $\int_{t_i}^{t_{i+1}}\alpha_{k_i}(s)ds=0$. It follows from (\ref{rescale}) and $\int_{t_i}^{t_{i+1}}\alpha_{k_i}(s)ds=0$ that

\begin{equation}\label{rescaleagain}
\begin{split}
 &f\left(\flow 0 T \e\left(\sum_{i=1}^nv_i(t)g_i^\tau\right) dt(x_0))\right)\\&=f(x_0)+\e^k(ad_{g_{i_1}^\tau}...ad_{g_{i_{k-1}}^\tau}g_{i_k}^\tau) f(x_0)+o(\e^k).
\end{split}
\end{equation}

Next, we rescale the control $v(\cdot)$ as mentioned earlier. Let $G_{s,v}:=\sum_{i=1}^n v_ig_i^s$ and let
\[
v^\e(t)=
\begin{cases}
\e^{-\alpha}v((t-\tau)/\e^\beta) & \text{ if } t\in (\tau,\tau+\e^\beta)\\
0 &\text{ otherwise}.
\end{cases}
\]

Then we have
\[
\begin{split}
 & f\left(\flow 0 T G_{s,v^\e(s)}ds(x_0)\right)\\&= f\left(\flow \tau {\tau+\e^\beta} G_{s,v^\e(s)}ds(x_0)\right)
 \\&= f\left(\flow 0 T \e^{\beta-\alpha}\sum_{i=1}^nv_i(s)g_i^{\e^\beta s+\tau}ds(x_0)\right)
 \\&= f\left(\flow 0 T \e^{\beta-\alpha}G_{\e^\beta s+\tau,v(s)}ds(x_0)\right).
\end{split}
\]

By using the asymptotic expansion in \cite[section 2.4.4]{AgSa}, the above equation becomes
\begin{equation}\label{expand}
\begin{split}
 & f\left(\flow 0 T G_{s,v^\e(s)}ds(x_0)\right)
 \\&=f(x_0)+\sum_{i=1}^{k}\int_{0\leq s_1\leq ...\leq s_i\leq T} \e^{i(\beta-\alpha)}G_{\e^\beta s_1+\tau,v(s_1)}...G_{\e^\beta s_i+\tau,v(s_i)}f(x_0)ds_1...ds_i+\\&+o(\e^{k(\beta-\alpha)}).
\end{split}
\end{equation}
as $\e\to 0$.

Let $I_i$ be the term
\[
 I_i(v(\cdot)):=\int_{0\leq s_1\leq ...\leq s_i\leq T} G_{\e^\beta s_1+\tau,v(s_1)}...G_{\e^\beta s_i+\tau,v(s_i)}f(x_0)ds_1...ds_i
\]
in the expansion (\ref{expand}).

Let us first deal with the term $I_1(v(\cdot))=\int_0^T G_{\e^\beta s+\tau,v(s)}f(x_0)ds$. For this, let $g_i^{\tau+\e s}=g_i^\tau+Z_i^{\e,s}$. Let us recall that $v(\cdot)=w(\cdot)+\alpha(\cdot)$ and $\alpha_j\equiv 0$ if $j\neq k_i$. 

\begin{equation}\label{preI1}
\begin{split}
I_1(v(\cdot))&=\int_0^T\sum_{i=1}^n v_i(s)g_i^{\tau+\e^\beta s}f(x_0)ds
\\ &=\int_0^T\sum_{i=1}^n v_i(s)g_i^{\tau}f(x_0)ds+\int_0^T\sum_{i=1}^n (w_i(s)+\alpha_i(s))Z^{\e,s}_if(x_0)ds
\\ &=\int_0^T\sum_{i=1}^n v_i(s)g_i^{\tau}f(x_0)ds +\sum_{i=1}^n \int_{t_{i-1}}^{t_i} (c_i+\alpha_{k_i}(s))Z^{\e, s}_{k_i}f(x_0)ds.
\end{split}
\end{equation}

The next lemma says that we can choose $\alpha$ to get rid of the last term of the above equation. 

\begin{lem}\label{I1}
There exists $\e_0>0$ and $\alpha^\e(\cdot)$ in $L^2$ such that $\int_{t_{i-1}}^{t_i}\alpha^\e_{k_i}(s)ds=0$ and
\begin{equation}
I_1(w(\cdot)+\alpha^\e(\cdot))=\int_0^T\sum_{i=1}^n v_i(s)g_i^{\tau}f(x_0)ds
\end{equation}
for all $0<\e<\e_0$.
Moreover, $\alpha^\e$ goes to 0 in $L^2$ as $\e$ goes to 0.
\end{lem}

\begin{proof}[Proof of Lemma \ref{I1}]
Recall that we need $\alpha^\e(\cdot)$ to satisfy the conditions
\begin{equation}\label{conditions}
\int_{t_{i-1}}^{t_i}\alpha^\e_{k_i}(s)ds=0, \quad \int_{t_{i-1}}^{t_i} (c_i+\alpha^\e_{k_i}(s))Z^{\e, s}_{k_i}f(x_0)ds=0.
\end{equation}
for all smooth functions $f$ and for all $i$. Consider local coordinates around the point $x_0$ and suppose that $Z^{\e, s}_{k_i}=(Z^{\e, s}_{k_i,1},...,Z^{\e, s}_{k_i,m})$  in this local coordinates. Then the conditions in (\ref{conditions}) is the same as that $\alpha^\e_{k_i}(\cdot)$ orthogonal to the constant functions and $c_i+\alpha^\e_{k_i}(\cdot)$ is orthogonal to $Z^{\e, \cdot}_{k_i}$ in $L^2([t_{i-1},t_i])$ for each $i$. Let $V_i$ be the finite dimensional subspace of $L^2([t_{i-1},t_i])$ defined by
\[
 V_i^\e:=\text{span}\{Z^{\e, \cdot}_{k_i,1},...,Z^{\e, \cdot}_{k_i,m}\}.
\]

A linear algebra argument shows that $\alpha^\e(\cdot)$ which satisfy the conditions (\ref{conditions}) exist if $V_i^\e$ does not contain any nonzero constant function. Therefore, we assume that
\[
\sum_{j=1}^ma_jZ^{\e, s}_{k_i,j}=c
\]
for some constants $a_1,...,a_m,c$, for all $s$ in $[t_{i-1},t_i]$, and for some $i$. We are going to show that $c$ must be zero and this finishes the proof of the lemma.

The above equation means that $(Z^{\e, s}_{k_i,1},...,Z^{\e, s}_{k_i,m})$ is contained in the affine space $\{z\in\Real^m|\sum_{i=1}^ma_iz_i=c\}$ for almost all $s$ in the interval $[t_{i-1},t_i]$. Therefore, $(\dd sZ^{\e, s}_{k_i,1},...,\dd sZ^{\e, s}_{k_i,m})$ is contained in the subspace $\{z\in\Real^m|\sum_{i=1}^ma_iz_i=0\}$ for each $s$ in the interval $[t_{i-1},t_i]$. Let us choose $\e_0$ such that $\tau+t\e$ is contained in the interval $\mathcal I$ for each $t$ in $[0,T]$ and for all $\e<\e_0$. Then it follows from the definition of the interval $\mathcal I$ that $(\dd sZ^{\e, s}_1(x_0),...,\dd sZ^{\e, s}_m(x_0))$ is contained in $\{z\in\Real^m|\sum_{i=1}^ma_iz_i=0\}$ for almost all $s$ in $[0,T]$. Therefore, $(Z^{\e, s}_{k_i,1}(x_0),...,Z^{\e, s}_{k_i,m}(x_0))$ is contained in the affine space $\{z\in\Real^m|\sum_{i=1}^ma_iz_i=c\}$ for all $s$ in $[0,T]$. However, $(Z^{\e, 0}_{k_i,1},...,Z^{\e, 0}_{k_i,m})=0$, so $c=0$ and this finishes the proof of the lemma.
\end{proof}

For the rest of the proof, we write $v(\cdot)=w(\cdot)+\alpha^\e(\cdot)$ and suppress the $\e$-dependence on $v$ to avoid complicated notation.

\begin{lem}\label{Ik}
\[
I_k(v(\cdot))=\int_{0\leq s_1\leq ...\leq s_k\leq T}\sum_{i_1,...,i_k=1}^n v_{i_1}(s_1)...v_{i_k}(s_{i_k})g_{i_1}^{\tau}...g_{i_k}^{\tau}f(x_0)ds_1...ds_k+o(1)
\]
as $\e\to 0$.
\end{lem}

\begin{proof}[Proof of Lemma \ref{Ik}]
This follows immediately from the definition of $G_{\tau,v}$. Indeed,
\begin{equation}\label{I2}
\begin{split}
&I_k(v(\cdot))\\&:=\int_{0\leq s_1\leq ...\leq s_k\leq T} G_{\e^\beta s_1+\tau,v(s_1)}...G_{\e^\beta s_k+\tau,v(s_k)}f(x_0)ds_1...ds_k\\
&=\int_{0\leq s_1\leq ...\leq s_k\leq T}\sum_{i_1,...,i_k=1}^n v_{i_1}(s_1)...v_{i_k}(s_k)g_{i_1}^{\tau+\e^\beta s_1}...g_{i_k}^{\tau+\e^\beta s_k}f(x_0)ds_1...ds_k\\
&=\int_{0\leq s_1\leq ...\leq s_k\leq T}\sum_{i_1,...,i_k=1}^n v_{i_1}(s_1)...v_{i_k}(s_{i_k})g_{i_1}^{\tau}...g_{i_k}^{\tau}f(x_0)ds_1...ds_k+o(1).
\end{split}
\end{equation}
\end{proof}

If we combine Lemma \ref{I1} and Lemma \ref{Ik} with (\ref{expand}) and assume that $3\beta-2\alpha>k(\beta-\alpha)>0$, then we have
\[
\begin{split}
 & f\left(\flow 0 T G_{s,v^\e(s)}ds(x_0)\right)
 \\&=f(x_0)+\sum_{i=1}^k\e^{i(\beta-\alpha)}I_i(w(\cdot)+\alpha^\e(\cdot))+O(\e^{k(\beta-\alpha)})
  \\&=f(x_0)+\sum_{j=1}^k\e^{j(\beta-\alpha)}\int_{0\leq s_1\leq ...\leq s_j\leq T}\sum_{i_1,...,i_j=1}^n v_{i_1}(s_1)...v_{i_j}(s_{i_j})g_{i_1}^{\tau}...g_{i_j}^{\tau}f(x_0)ds_1...ds_j
  \\&+o(\e^{k(\beta-2\alpha)})
\end{split}
\]
as $\e\to 0$.

By (\ref{rescaleagain}), the above becomes
\[
\begin{split}
 & f\left(\flow 0 T G_{s,v^\e(s)}ds(x_0)\right)
  \\&=f\left(\flow 0 T \e^{\beta-\alpha}\sum_{i=1}^nv_i(t)g_i^\tau dt\right)+o(\e^{k(\beta-\alpha)})
   \\&=f(x_0)+\e^{k(\beta-\alpha)}(ad_{g_{i_1}}^\tau ...ad_{g_{i_{k-1}}}g_{i_k})f(x_0)+o(\e^{k(\beta-\alpha)}).
\end{split}
\]

Finally, we need $v^\e(\cdot)$ converges to 0 in $L^p$. Indeed, by the definition of $v^\e(\cdot)$, we have
\[
\begin{split}
\int_0^T|v^\e(t)|^pdt&=\int_\tau^{\tau+\e^\beta}\left|\e^{-\alpha}v\left(\frac{t-\tau}{\e^\beta}\right)\right|^pdt
\\&=\int_0^1\left|v\left(s\right)\right|^p\e^{\beta-\alpha p} ds
\\&=\int_0^1\left|w\left(s\right)+\alpha^\e\left(s\right)\right|^p\e^{\beta-\alpha p} ds.
\end{split}
\]
Since $w(\cdot)$ is in $L^\infty$ and $\alpha^\e(\cdot)$ is in $L^2$, $v^\e(\cdot)$ converges to 0 in $L^p$ if $\beta-\alpha p>0$ and $p\leq 2$.

In conclusion, if we can choose $\alpha$ and $\beta$ such that the following three conditions are satisfied, then the conclusion of the theorem holds.
\[
 3\beta-2\alpha>k(\beta-\alpha)>0,\quad \beta-\alpha p>0,\quad p\leq 2.
\]

It is not hard to check that these inequalities are satisfied under the assumptions of the proposition.
\end{proof}

The local controllability of the control system follows using Proposition \ref{infin} and implicit function theorem as in the Chow-Rashevskii theorem. Finally, the continuity of the cost follows from the local controllability  and  standard arguments as in \cite{CaRi}. 

\begin{proof}[Proof of Theorem \ref{morecontinuity}]
Lower semi-continuity of the cost can be proved in the same way as in \cite{CaRi}. To prove upper semi-continuity, we let $(x_1,y_1,t_1),(x_2,y_2,t_2),...$ be a sequence of points which converges to $(x,y,T)$ and $\lim_{i\to\infty}c_{t_i}(x_i,y_i)=r$. We want to show that $c_T(x,y)\geq r$. 

Assume that this is not the case. Let $u(\cdot)$ and $x(\cdot)$  be a control and the trajectory associated to this control, respectively, such that $x(0)=x,\ x(T)=y$ and $\int_0^TL(x(s),u(s))ds<r$. Recall that the family of vector fields $\{g_1^t,...,g_n^t|t\in[0,T]\}$ is $k$-generating. Therefore, we can find vector fields $V_1,...,V_k$ from the the set
\[
\left\{ad_{g_{i_1}^t}...ad_{g_{i_l}^t}\Big|1\leq i_j\leq n,0\leq t\leq T,0\leq l\leq k\right\}.
\]
which span the tangent space $T_x M$. We also assume that $V_i$ is defined by the Lie brackets of $\kappa_i$ vector fields of the form $g_j^{\tau_i}$. By perturbation, we can assume $\tau_i\neq \tau_j$ for $i\neq j$ and that each $\tau_i$ satisfies the condition in Theorem \ref{infin}. Therefore, by Theorem \ref{infin}, there is a family of control $w_{i,\e}(\cdot)$ such that
\[
f(\Phi^T(w_{i,\e}(\cdot),x_0))=f(x_0)+\e^{\kappa_i(\beta-\alpha)}\int_0^T V_if(x_0)ds+o(\e^{\kappa_i(\beta-\alpha)}).
\]

Note that from the proof of Proposition \ref{infin}, we can assume that $w_{i,\e_i}$ is supported in a small interval $J_i$ around $\tau_i$ by taking $(\e_1,...,\e_n)$ small enough. Moreover, we can assume that the intervals $J_i$ are disjoint. We define the map $\Psi:M\times\Real^n\times (0,\infty)\to M$ by
\[
\Psi(x,\e_1,...,\e_n,T)=\Phi^T(w_{1,\e_1/\kappa_1(\beta-\alpha)})\circ\Phi^T(w_{2,\e_2/\kappa_2(\beta-\alpha)})\circ...\circ\Phi^T(w_{n,\e_n/\kappa_n(\beta-\alpha)})(x).
\]

Since $\dd{\e_i}\Big|_{\e_i=0}\Psi=V_i$, the map $\Psi$ is of full rank at the point $(x,0,...,0,T)$. It follows from implicit function theorem that there exists a map $\psi:U_1\to U_2$ from a neighborhood $U_1$ of $(x,y,T)$ to a neighborhood $U_2$ of $(0,...,0)$ such that $\Psi(z_1,\psi(z_1,z_2,t),t)=z_2$ for all pairs $(z_1,z_2,t)$ in the set $U_1$.

Let $(\e_1^i,...,\e_n^i)=\psi(x_i,y_i,t_i)$ and let $v^i(\cdot)$ be the control defined by $v^i(t)=w_{j,\e_j^i}(t)$ and $0$ otherwise. $v^i(\cdot)$ is well defined if $(\e_1^i,...,\e_n^i)$ is close enough to $(0,...,0)$. Let $x^i(\cdot)$ be a curve in $M$ which satisfies (\ref{controlaff}) with control $v^i(\cdot)$. We know that $v^i(\cdot)$ converges strongly in $L^p$ to $0$ and $x^i(\cdot)$ converges uniformly to $x(\cdot)$.

Assume, without loss of generality, that $u(t)=0$ for all $t>T$. Then
\begin{equation}\label{C1}
\begin{split}
&\left|\int_0^{t_i}L(x^i(s),u(s)+v^i(s))ds-\int_0^{T}L(x(s),u(s))ds\right|\\&  \leq \left|\int_0^{t_i}L(x^i(s),u(s)+v^i(s))ds-\int_0^{t_i}L(x(s),u(s)+v^i(s))ds\right|+\\& +\left|\int_0^{\tau_i}L(x(s),u(s)+v^i(s))ds-\int_0^{\tau_i}L(x(s),u(s))ds\right|+\\& +\left|\int_{t_i}^T\max\{L(x(s),u(s)+v_i(s)),L(x(s),u(s))\}ds\right|,
\end{split}
\end{equation}
where $\tau_i=\min \{T,t_i\}$.

Since $L(x,u)\leq C_2|u|^p+K_2$, $t_i$ converges to $T$, and the sequence  $u(\cdot)+v^i(\cdot)$ converges to $u(\cdot)$ in $L^p$, we have
\begin{equation}\label{C2}
\begin{split}
&\left|\int_{t_i}^T\max\{L(x(s),u(s)+v^i(s)),L(x(s),u(s))\}ds\right|
\\&\leq \int_{t_i}^T\max\{C_2|u(s)+v^i(s)|^p-K_2, C_2|u(s)|^p-K_2\}ds
\\& \to 0
\end{split}
\end{equation}
as $i\to\infty$.

Recall that $|\frac{\partial L}{\partial x}|\leq C_3|u|^2$, where the norm is taken with respect to certain Riemannian metric. Let $d$ be the corresponding Riemannian distance function. Then we have
\begin{equation}\label{C3}
\begin{split}
&\left|\int_0^{t_i}L(x^i(s),u(s)+v^i(s))ds-\int_0^{t_i}L(x(s),u(s)+v^i(s))ds\right|
\\& \leq \int_0^{t_i}\left|L(x^i(s),u(s)+v^i(s))ds-L(x(s),u(s)+v^i(s))\right|ds
\\&
\leq \sup_sd(x^i(s),x(s))\,\int_0^{t_i}C_3\left|u(s)+v^i(s)\right|^2ds\\&\to 0 \quad \quad \text{ as } i\to\infty.
\end{split}
\end{equation}

By construction of the control $v^i(\cdot)$, we know that the indicator function $\mathbb I_{\{t|v_i(t)\neq 0\}}$ converges to zero almost everywhere. It follows that
\begin{equation}\label{C4}
\begin{split}
&\left|\int_0^{\tau_i}L(x(s),u(s)+v^i(s))ds-\int_0^{\tau_i}L(x(s),u(s))ds\right|\\&
\leq \int_0^{\tau_i}\mathbb I_{\{t|v^i(t)\neq 0\}}(\left|L(x(s),u(s)+v^i(s))\right|+\left|L(x(s),u(s))\right|)ds\\& \to\ 0\quad \quad \text{ as } i\to\infty.
\end{split}
\end{equation}

Therefore, if we combine (\ref{C1}), (\ref{C2}), (\ref{C3}), and (\ref{C4}), then we have
\[
\lim_{i\to\infty}\int_0^{t_i}L(x^i(s),u(s)+v^i(s))ds=\int_0^TL(x(s),u(s))ds<r.
\]
%Hence, if $(x(\cdot),u(\cdot))$ is chosen so that $c_T(x,y)=\int_0^TL(x(s),u(s))ds$, then we have
On the other hand,
\[
\lim_{i\to\infty}\int_0^{t_i}L(x^i(s),u(s)+v^i(s))ds\geq \lim_{i\to\infty}c_{t_i}(x_i,y_i)=r.
\]

Therefore, this gives a contradiction and we finish the proof of upper semi-continuity of the function $(t,x,y)\mapsto c_t(x,y)$.
\end{proof}

\bigskip

\

\section{Optimal Control and Weak KAM Theorem}

In this section, we give a proof of Theorem \ref{wKAM} using some ideas from \cite{BeBu1} and \cite{BeBu2}. More precisely, we will prove the following.

\begin{thm}\label{wKAMcts}
Assume that the function $(t,x,y)\mapsto c_t(x,y)$ defined by (\ref{controlcost}) is continuous and the manifold $M$ is compact, then there exists a unique constant $h$ such that the Hamilton-Jacobi-Bellman equation (\ref{sHJB}) has a viscosity solution.
\end{thm}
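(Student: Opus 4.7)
The plan is to reduce the weak KAM theorem to a fixed point problem for the Lax--Oleinik semigroup, following the strategy of \cite{BeBu1, BeBu2}. Define $T_t \colon C(M) \to C(M)$ for $t > 0$ by
$$T_t f(x) := \inf_{y \in M}\, [f(y) + c_t(y, x)].$$
Compactness of $M$ and continuity of $c_t$ make $T_t f$ continuous and render $T_t$ non-expansive in the sup norm; the semigroup identity $T_{s+t} = T_s \circ T_t$ follows from the trajectory-concatenation relation $c_{s+t}(y, x) = \inf_z[c_s(y, z) + c_t(z, x)]$. Continuity of $c_1$ on the compact product $M \times M$ supplies a uniform modulus of continuity $\omega$ with $|c_1(y, x) - c_1(y, x')| \leq \omega(d(x, x'))$ for every $y$, so $T_1 f$ inherits the modulus $\omega$ for every $f \in C(M)$. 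Fix $x_0 \in M$ and set
$$K := \{ f \in C(M) : f(x_0) = 0,\ |f(x) - f(x')| \leq \omega(d(x, x'))\ \text{for all}\ x, x' \in M \}.$$
Then $K$ is convex, uniformly bounded by $\omega(\operatorname{diam} M)$, and equicontinuous, so compact in $C(M)$ by Arzelà--Ascoli; the map $\Phi(f) := T_1 f - (T_1 f)(x_0)$ sends $K$ into itself and is Lipschitz. Schauder's fixed point theorem produces $f_* \in K$ with $T_1 f_* = f_* + h$, where $h := (T_1 f_*)(x_0)$.

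To upgrade this from integer to continuous time, set $u_t := T_t f_* - h t$ for $t > 0$. Joint continuity of $(t, x, y) \mapsto c_t(x, y)$, together with compactness of $M$ (to extract convergent subsequences of minimizers), makes $(t, x) \mapsto u_t(x)$ jointly continuous on $(0, \infty) \times M$. The semigroup property yields $u_{t+1} = T_t(T_1 f_*) - h(t+1) = T_t(f_* + h) - h(t+1) = u_t$, so $t \mapsto u_t$ is $1$-periodic. Define
$$u(x) := \inf_{t \in [\delta, 1 + \delta]} u_t(x)$$
for any small $\delta > 0$; this is continuous on $M$. Interchanging infima and invoking the semigroup,
$$T_s u(x) = \inf_t \inf_y [u_t(y) + c_s(y, x)] = \inf_t [u_{s+t}(x) + h s] = u(x) + h s,$$
where the last equality uses $1$-periodicity of $t \mapsto u_t$ to shift the interval of infimum back to $[\delta, 1 + \delta]$. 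Hence $T_s u = u + h s$ for all $s \geq 0$.

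The standard equivalence between continuous fixed points of the Lax--Oleinik semigroup (up to a linear drift) and viscosity solutions of the stationary Hamilton--Jacobi equation then gives that $u$ is a viscosity solution of $H(x, du_x) = -h$: one verifies the sub- and super-solution properties directly from $T_s u = u + hs$, using upper semiconcavity of $u$ along minimizing trajectories on the one side and a test function argument on the other. Uniqueness of $h$ is a soft consequence of non-expansiveness: given two such pairs $(u_1, h_1)$ and $(u_2, h_2)$, the inequality
$$\|T_t u_1 - T_t u_2\|_\infty = \|(u_1 - u_2) + (h_1 - h_2) t\|_\infty \leq \|u_1 - u_2\|_\infty$$
must hold for all $t \geq 0$, which forces $h_1 = h_2$. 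The most delicate step I anticipate is verifying the joint continuity of $(t, x) \mapsto T_t f(x)$ needed for Step~2, since the argument requires extracting a subsequence of minimizers and matching the limit to the expected value through joint continuity of $c_t$; the translation from the semigroup fixed point to the viscosity solution also warrants care, since $H$ here is neither strictly convex nor coercive in the fibers, so one must argue directly rather than invoking the classical statements of weak KAM theory.
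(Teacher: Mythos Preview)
Your argument is correct and takes a genuinely different route from the paper's.  The paper never invokes a fixed-point theorem: it first proves that the family $\{c_t:t\ge a\}$ is equicontinuous and that $|c_t(x,y)-ht|\le K$ for a uniquely determined $h=\lim_{t\to\infty}c_t(x,y)/t$ (their Theorem~\ref{cpctc}); from this it deduces that $\{S_tf-ht:t\ge a\}$ is bounded and equicontinuous, sets $\bar f=\inf_{t\ge a}(S_tf-ht)$, and uses monotonicity of $t\mapsto S_t\bar f-ht$ (Fathi's argument) to obtain a continuous-time fixed point directly.  Your approach is more economical on the cost side---you only need a modulus of continuity for $c_1$ to run Schauder on the normalized time-one map, and then the periodicity trick $u=\inf_{t\in[\delta,1+\delta]}(T_tf_*-ht)$ promotes the discrete fixed point to a continuous one without any analysis of $c_t$ for large $t$.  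The price is that you do not obtain the asymptotic identification $h=\lim_t c_t/t$ or the uniform bound $|c_t-ht|\le K$; the paper needs both of these later (Theorem~\ref{cpctc} feeds into Section~5, in particular the proof of Theorem~\ref{alpha}).  Your uniqueness argument via non-expansiveness is cleaner than the paper's, which goes back through the asymptotics of $c_n$.

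One caveat that applies to both proofs: uniqueness of $h$ as stated in the theorem refers to constants for which a \emph{viscosity} solution exists, whereas both you and the paper prove uniqueness among constants $k$ for which the semigroup equation $T_tu=u+kt$ has a continuous solution.  The direction ``semigroup fixed point $\Rightarrow$ viscosity solution'' is the one used for existence, and you correctly flag that the reverse direction (needed for uniqueness) requires care given that $H$ is neither strictly convex nor coercive; the paper is equally silent on this point.
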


We start the proof by introducing the Lax-Oleinik semigroup:

\begin{equation}
S_tf(y)=\inf_{x\in M}[c_t(x,y)+f(x)].
\end{equation}

\begin{thm}\label{viscosity}
Assume that the function $(t,x,y)\mapsto c_t(x,y)$ defined by (\ref{controlcost}) is continuous. Then, for each function $f$, the function $(t,x)\mapsto S_tf(x)$ is continuous on $(0,\infty)\times M$. Moreover, it is a viscosity solution to the Hamilton-Jacobi-Bellman equation $\partial_t f+H(x,\partial_x f)=0$ on $(0,\infty)\times M$.
\end{thm}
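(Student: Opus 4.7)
The plan is to follow the classical three-step program for showing that a Lax--Oleinik value function is a viscosity solution, adapted to the control setting: (i) establish continuity of $(t,x)\mapsto S_t f(x)$ from the hypothesis that $c_t$ is continuous; (ii) record the dynamic programming principle $u(t+h,y)=\inf_{z}[c_h(z,y)+u(t,z)]$, where $u(t,x):=S_t f(x)$; (iii) derive the viscosity sub- and super-solution properties by comparing $u$ with smooth test functions along suitably chosen admissible trajectories.

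For continuity, I would assume $f$ continuous (natural on compact $M$). Upper semicontinuity of $u$ at $(t_0,y_0)$ is immediate: for each $\epsilon>0$ pick a near-minimizer $x_\epsilon$, then $S_t f(y)\leq c_t(x_\epsilon,y)+f(x_\epsilon)$ and the right-hand side is jointly continuous in $(t,y)$ by hypothesis. Lower semicontinuity uses compactness: if $(t_n,y_n)\to(t_0,y_0)$ with near-minimizers $x_n$, extract a subsequence $x_n\to x^*$ and pass to the limit. The semigroup identity $u(t+h,y)=\inf_{z}[c_h(z,y)+u(t,z)]$ reduces to the concatenation property $c_{t+h}(x,y)=\inf_z[c_t(x,z)+c_h(z,y)]$, which is immediate by splitting admissible trajectories at time $t$.

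For the subsolution property, let $\varphi$ be a smooth test function touching $u$ from above at $(t_0,x_0)$. Fix any $v\in\Real^n$ and integrate $\dot x=F(x,v)$ backward from $x_0$ to obtain a trajectory $x(\cdot)$ on $[t_0-h,t_0]$. By DPP, $u(t_0,x_0)\leq\int_{t_0-h}^{t_0}L(x(r),v)\,dr+u(t_0-h,x(t_0-h))$. Replacing $u$ by $\varphi$ on both sides (using $\varphi\geq u$ with equality at the touch point), dividing by $h$, and letting $h\to 0$ yields $\partial_t\varphi(t_0,x_0)+\partial_x\varphi\cdot F(x_0,v)-L(x_0,v)\leq 0$; taking the supremum in $v$ gives $\partial_t\varphi+H(x_0,\partial_x\varphi)\leq 0$.

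The hard part is the supersolution. Let $\varphi$ touch $u$ from below at $(t_0,x_0)$ and suppose for contradiction that $\partial_t\varphi+H(x_0,\partial_x\varphi)<-\delta$ there. Continuity of $\partial_t\varphi$, $\partial_x\varphi$, and of $H$ (the latter following from assumption (1) with $q>1$ together with convexity of $L$ in the control) propagates this to $\partial_t\varphi+\partial_x\varphi\cdot F(x,v)-L(x,v)<-\delta/2$ on a neighborhood $N$ of $(t_0,x_0)$, uniformly in $v$. Using DPP, choose $\epsilon$-optimal trajectories $x_h(\cdot)$ on $[t_0-h,t_0]$ with $x_h(t_0)=x_0$ and controls $v_h$ so that $u(t_0,x_0)\geq\int L(x_h,v_h)\,dr+u(t_0-h,x_h(t_0-h))-\epsilon h$. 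Substituting $\varphi$ for $u$ and writing $\varphi(t_0,x_0)-\varphi(t_0-h,x_h(t_0-h))=\int_{t_0-h}^{t_0}[\partial_t\varphi+\partial_x\varphi\cdot F(x_h,v_h)]\,dr$ gives $\int[\partial_t\varphi+\partial_x\varphi\cdot F-L]\,dr\geq-\epsilon h$, which contradicts the pointwise bound $-\delta/2$ once the trajectory is confined to $N$. To verify this confinement (the main obstacle), I would bound $\int L(x_h,v_h)\,dr\leq u(t_0,x_0)-u(t_0-h,x_h(t_0-h))+\epsilon h=O(1)$ using continuity and boundedness of $u$ on the compact $M$, deduce $\int|v_h|^q\,dr=O(1)$ from the coercivity $L\geq C_1|v|^q+K_1$, and apply H\"older to get $\int|v_h|\,dr=O(h^{1-1/q})$. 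Since the $X_i$ are bounded on $M$, $\sup_{r\in[t_0-h,t_0]}|x_h(r)-x_0|=O(h^{1-1/q})\to 0$ because $q>1$, so $x_h$ lies in $N$ for $h$ small; choosing $\epsilon<\delta/4$ then closes the contradiction.
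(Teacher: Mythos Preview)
Your argument is correct and is exactly the standard dynamic-programming proof that the paper defers to by citing Evans; the paper's own proof is one sentence for continuity (compactness of $M$ plus joint continuity of $c_t$) and a bare reference for the viscosity property. Note only that your supersolution confinement step invokes the coercivity bound $L\geq C_1|u|^q+K_1$, which is not literally among the hypotheses of this particular theorem but is a standing assumption of the paper, so this is harmless.
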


\begin{proof}
Continuity of the function $S_tf$ follows immediately from that of $c_t$ and compactness of the manifold $M$. The fact that it is a viscosity solution follows as in \cite{Ev}.
\end{proof}

The following theorem is a continuous version of \cite[Lemma 9]{BeBu2} and the proof is similar.

\begin{thm}\label{cpctc}
Assume that the function $(t,x,y)\mapsto c_t(x,y)$ is continuous and the manifold $M$ is compact. Then, for each $a>0$, the family $\{c_t|t\geq a\}$ is equicontinuous. Moreover, there exists constants $h$ and $K$ such that
\[
|c_t(x,y)-ht|\leq K
\]
for all $t\geq a$ and all $x,y$ in $M$.
\end{thm}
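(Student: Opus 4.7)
The plan is to combine three ingredients: the subadditivity of the cost, a smoothing property of the Lax--Oleinik semigroup, and Fekete's lemma. First I would record the dynamic programming identity
\[
c_{t+s}(x,z) \;=\; \inf_{y \in M}\bigl[c_t(x,y) + c_s(y,z)\bigr],
\]
whose ``$\le$'' direction is trajectory concatenation and whose ``$\ge$'' direction comes from splitting any admissible trajectory of total time $t+s$ at the intermediate instant $t$ (using only the infima, no minimizer required).

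The main obstacle is to establish equicontinuity of the family $\{c_t\}_{t \ge a}$ on $M \times M$ uniformly in $t$. I would do this using the forward and backward Lax--Oleinik operators
\[
S_a f(y) := \inf_{x \in M}\bigl[c_a(x,y) + f(x)\bigr], \qquad \tilde S_a f(x) := \inf_{y \in M}\bigl[c_a(x,y) + f(y)\bigr].
\]
Theorem~\ref{morecontinuity} makes $c_a$ continuous on the compact set $M\times M$, hence uniformly continuous; let $\omega_a$ and $\tilde\omega_a$ be its moduli of continuity in the second and first arguments. A short inf-convolution estimate shows that, for any bounded function $f$, $S_a f$ has modulus of continuity bounded by $\omega_a$, independent of $f$; the same holds for $\tilde S_a$ with modulus $\tilde\omega_a$. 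The dynamic programming identity now gives, for every $x_0, y_0 \in M$ and $t > a$,
\[
c_t(x_0,\cdot) = S_a\bigl(c_{t-a}(x_0,\cdot)\bigr), \qquad c_t(\cdot,y_0) = \tilde S_a\bigl(c_{t-a}(\cdot,y_0)\bigr),
\]
so $|c_t(x,y) - c_t(x',y')| \le \tilde\omega_a(d(x,x')) + \omega_a(d(y,y'))$ uniformly in $t \ge a$ (the case $t = a$ being covered by continuity of $c_a$ itself). Chaining a finite cover of $M \times M$ by small balls converts this equicontinuity into a uniform oscillation bound $\alpha(t) - \beta(t) \le K$, where $\alpha(t) := \sup c_t$ and $\beta(t) := \inf c_t$.

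Finally I would extract $h$. Subadditivity gives $\alpha(t+s) \le \alpha(t) + \alpha(s)$, and the dynamic programming identity gives $\beta(t+s) \ge \beta(t) + \beta(s)$, so Fekete's lemma produces $h := \lim_t \alpha(t)/t = \inf_t \alpha(t)/t$ and $h' := \lim_t \beta(t)/t = \sup_t \beta(t)/t$, with $\alpha(t) \ge ht$ and $\beta(t) \le h't$ for all $t$. Since $\beta \le \alpha$ one has $h' \le h$; the oscillation bound forces $ht \le \alpha(t) \le \beta(t) + K \le h't + K$, so $(h - h')t \le K$ for all $t \ge a$, hence $h = h'$. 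Both $\alpha(t) - ht$ and $\beta(t) - ht$ then lie in $[-K,K]$, and $|c_t(x,y) - ht| \le K$ follows. Everything downstream of equicontinuity is essentially bookkeeping, so the smoothing observation for $S_a$ is the real engine of the proof, and it has the pleasant feature of using only the continuity supplied by Theorem~\ref{morecontinuity} plus compactness of $M$, with no convexity or coercivity hypothesis on $L$ coming in directly.
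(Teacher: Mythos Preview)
Your proposal is correct and follows essentially the same line as the paper: equicontinuity is obtained from the dynamic programming identity by controlling the modulus of $c_t$ in each endpoint by that of $c_a$ (the paper does this by partitioning $[0,t]$ and altering only the first and last segments, which is exactly your Lax--Oleinik smoothing $c_t(x_0,\cdot)=S_a\bigl(c_{t-a}(x_0,\cdot)\bigr)$ and its backward analogue), and then $h$ is extracted via sub/superadditivity of $\alpha=\sup c_t$, $\beta=\inf c_t$ together with a Fekete argument and the uniform oscillation bound. One minor remark: you need not invoke Theorem~\ref{morecontinuity}, since continuity of $(t,x,y)\mapsto c_t(x,y)$ is already a hypothesis of the present statement.
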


\begin{proof}
The function $(t,x,y)\mapsto c_t(x,y)$ is uniformly continuous on $[a,b]\times M\times M$ for some constants $b>2a>0$. So, given $\e>0$, there is a $\delta>0$ such that
\[
|c_\tau(x_1,y_1)-c_\tau(x_2,y_2)|<\e/2
\]
whenever $d(x_1,x_2)<\delta$, $d(y_1,y_2)<\delta$ and $a<\tau<b$.

Assume that $t\geq a$. Since $b>2a$, there is a partition $0=t_0\leq t_1\leq t_2\leq ...\leq t_l=t$ such that $a< t_{i+1}-t_i<b$. Assume that $c_t(x_1,y_1)\geq c_t(x_2,y_2)$ and let $x_2=z_0,z_1,...,z_l=y_2$ be points on the manifold $M$ such that $c_t(x_2,y_2)=\sum_{i=1}^lc_{t_i}(z_{i-1},z_i)$. Then we have
\[
\begin{split}
&c_t(x_1,y_1)-c_t(x_2,y_2)\\&\leq c_{t_1-t_0}(x_1,z_1)-c_{t_1-t_0}(x_2,z_1)+c_{t_l-t_{l-1}}(z_{l-1},y_1)-c_{t_l-t_{l-1}}(z_{l-1},y_2)\\&<\e
\end{split}
\]
whenever $d(x_1,x_2)<\delta$ and $d(y_1,y_2)<\delta$.
It follows that $\{c_t|t\geq a\}$ is an equicontinuous family.

Let $M_t=\sup_{x,y}c_t(x,y)$ and $m_t=\inf_{x,y}c_t(x,y)$, where the supremum and the infimum are taken over all pairs of points of the manifold $M$. Let $t_1$ and $t_2$ be two positive numbers and let $z$ be a point on the manifold $M$ such that $c_{t_1+t_2}(x,y)=c_{t_1}(x,z)+c_{t_2}(z,y)$. It follows from this $M_{t_1+t_2}\leq M_{t_1}+M_{t_2}$. Similarly, $m_t$ satisfies $m_{t_1+t_2}\geq m_{t_1}+m_{t_2}$. It follows that the infimum of the function $\frac{M_t}{t}$ is finite. Indeed, if the infimum of $\frac{M_t}{t}$ is $-\infty$, then so is $\frac{m_t}{t}$. But note that $\frac{m_{kt_0}}{kt_0}\geq \frac{m_{t_0}}{t_0}$ for all positive integer $k$. This gives a contradiction. It follows that $M:=\inf_t\frac{M_t}{t}$ is finite. Given $\e>0$, we find $t_0$ such that $\frac{M_{t_0}}{t_0}<M+\e$. Every $t> t_0$ can be decompose into $t=kt_0+s$, where $t_0\leq s\leq 2t_0$. It follows that
\[
M\leq \frac{M_t}{t}\leq \frac{kM_{t_0}+M_s}{t}= \frac{M_{t_0}}{t_0}\frac{kt_0}{kt_0+s}+\frac{M_s}{t}<(M+\e)\frac{kt_0}{kt_0+s}+\frac{M_s}{t}.
\]
By continuity of the cost $c$, we know that $M_s$ is bounded. It follows that from this and the above inequality that
\[
\lim\limits_{t\to\infty}\frac{M_t}{t}=M.
\]
Similarly, we also have
\[
\lim\limits_{t\to\infty}\frac{m_t}{t}=m.
\]
Finally, it follows from equicontinuity of the family $\{c_t|t\geq a\}$ that $M_t-m_t\leq C$ for some constant $C$ and for all $t\geq a$. Therefore, $h:=M=m$.
\end{proof}

\begin{lem}\label{cpctf}
Assume that the function $(t,x,y)\mapsto c_t(x,y)$ is continuous and the manifold $M$ is compact. Let $f$ be a bounded function, then the family $\mathcal S:=\{S_tf-ht|t\geq a\}$ is uniformly bounded and equicontinuous.
\end{lem}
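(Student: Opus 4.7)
The plan is to derive both properties as direct consequences of Theorem \ref{cpctc}, which simultaneously provides the uniform bound $|c_t(x,y) - ht| \leq K$ for $t \geq a$ and the equicontinuity of the family $\{c_t\}_{t \geq a}$ on $M \times M$. Since $f$ is bounded, write $B := \sup_{x \in M} |f(x)| < \infty$.

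First I would handle uniform boundedness by rewriting
\[
S_t f(y) - ht = \inf_{x \in M} \bigl[(c_t(x,y) - ht) + f(x)\bigr],
\]
and observing that the bracketed quantity lies in $[-K - B,\, K + B]$, which forces the infimum, and hence $S_t f(y) - ht$, to lie in the same interval uniformly in $y \in M$ and $t \geq a$.

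Next, for equicontinuity, I would fix $\epsilon > 0$ and invoke the equicontinuity half of Theorem \ref{cpctc} to produce $\delta > 0$ such that $d(y_1, y_2) < \delta$ implies $|c_t(x, y_1) - c_t(x, y_2)| < \epsilon$ for all $x \in M$ and all $t \geq a$. Given arbitrary $\eta > 0$, I would pick a near-minimizer $x_2 \in M$ with $c_t(x_2, y_2) + f(x_2) < S_t f(y_2) + \eta$ and plug it into the definition of $S_t f(y_1)$ to obtain
\[
S_t f(y_1) - S_t f(y_2) \leq \bigl[c_t(x_2, y_1) - c_t(x_2, y_2)\bigr] + \eta < \epsilon + \eta.
\]
Sending $\eta \to 0$ and exchanging $y_1 \leftrightarrow y_2$ yields $|S_t f(y_1) - S_t f(y_2)| \leq \epsilon$; subtracting $ht$ is harmless since $t$ is constant along each member of $\mathcal{S}$, so the same modulus of continuity serves every function in the family.

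The only mild subtlety is that $f$ is only assumed bounded, so the infimum defining $S_t f(y)$ need not be attained; this forces the near-minimizer argument above rather than an exact-minimizer one, but it does not otherwise complicate matters. Everything else is routine, and I do not anticipate any real obstacle beyond unpacking Theorem \ref{cpctc}.
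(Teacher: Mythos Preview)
Your proposal is correct and follows essentially the same approach as the paper: both derive uniform boundedness from the bound $|c_t - ht| \leq K$ in Theorem \ref{cpctc}, and both derive equicontinuity by picking a near-minimizer for $S_tf$ at one point and using the equicontinuity of $\{c_t\}_{t\geq a}$ to compare with the other point. The only cosmetic differences are that the paper uses a single $\e/2$ slack for both the near-minimizer and the $c_t$ oscillation, whereas you separate them into $\eta$ and $\epsilon$ and let $\eta\to 0$; and the paper treats equicontinuity before boundedness.
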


\begin{proof}
According to Lemma \ref{cpctc}, the family $\{c_t|t\geq a\}$ is equicontinuous. So, for each $\e>0$, there is a $\delta>0$ such that for all $t\geq a$
\[
|c_t(x_1,y_1)-c_t(x_2,y_2)|<\e/2
\]
whenever $d(x_1,x_2)<\delta$ and $d(y_1,y_2)<\delta$.

By definition of $S_tf$, we can find, for each $\e>0$, a point $z_t$ such that

\[
S_tf(x_2)>c_t(z_t,x_2)+f(z_t)-\e/2.
\]

It follows that
\[
S_tf(x_1)-S_tf(x_2)< c_t(z_t,x_1)+f(z_t)-c_t(z_t,x_2)-f(z_t)+\e/2 <\e.
\]
Since the above equation holds for all $\e$ and all $t\geq a$, we conclude that the family $\mathcal S$ is equicontinuous.

Fix a point $x$ in $M$. For each $\e>0$, let $z$ be a point in $M$ such that

\[
c_t(z,x)+f(z)\geq S_tf(x)>c_t(z,x)+f(z)-\e.
\]

Therefore, by Theorem \ref{cpctc}, we have

\[
K+\sup_{y\in M}\{f(y)\}\geq S_tf(x)-ht>-K+\inf_{y\in M}\{f(y)\}-\e
\]
for some constant $K>0$.
We conclude from this that $\mathcal S$ is uniformly bounded.
\end{proof}

Define the function $\bar f$ by

\[
\bar f(x)=\inf_{t\geq a}[S_tf(x)-ht].
\]
It follows from Lemma \ref{cpctf} that $\bar f$ is bounded. The following theorem taken from \cite{Fa2} together with Theorem \ref{viscosity} and Lemma \ref{cpctf} finish the proof of the existence part of Theorem \ref{wKAMcts}. We give a sketch of the proof here.

\begin{thm}\label{wKAMS}
Assume that there exists a constant $h$ such that the family $\mathcal S:=\{S_tf-ht|t\geq a\}$ is uniformly bounded and equicontinuous, then $S_t\bar f-ht$ converges uniformly to a function $\tilde f$. Moreover, it satisfies
\[
S_t\tilde f-ht = \tilde f.
\]
\end{thm}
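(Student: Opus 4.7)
The plan is to exploit the Lax--Oleinik semigroup structure to show that $s\mapsto S_s\bar f-hs$ is pointwise monotone non-decreasing and bounded, hence pointwise convergent, and then to upgrade the convergence to the uniform topology using the hypothesized equicontinuity of $\mathcal S$. The argument rests on three elementary properties of $S_t$ that I would record first: (i) the semigroup law $S_{t+s}=S_t\circ S_s$, which follows from the dynamic programming identity $c_{t+s}(x,z)=\inf_y[c_s(x,y)+c_t(y,z)]$; (ii) additivity with constants, $S_t(g+c)=S_tg+c$; and (iii) non-expansiveness in the sup norm, $\|S_tg-S_tg'\|_\infty\leq\|g-g'\|_\infty$. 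Each of these is an immediate manipulation of the infimum $S_tg(y)=\inf_x[c_t(x,y)+g(x)]$.

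The central step is an interchange of infima. Substituting $\bar f(y)=\inf_{t\geq a}[S_tf(y)-ht]$ into the definition of $S_s$ and using the semigroup law,
\[
S_s\bar f(x)-hs=\inf_{t\geq a}\bigl[S_{s+t}f(x)-h(s+t)\bigr]=\inf_{\sigma\geq s+a}\bigl[S_\sigma f(x)-h\sigma\bigr].
\]
Two observations follow. First, as $s$ increases the indexing set $\{\sigma\geq s+a\}$ shrinks, so $s\mapsto S_s\bar f-hs$ is pointwise non-decreasing; and uniform boundedness of $\mathcal S$ supplies a uniform upper bound. Hence $S_s\bar f-hs$ converges pointwise on $M$ to some function $\tilde f$. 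Second, each $S_s\bar f-hs$ is the infimum of a subfamily of $\mathcal S$ and therefore inherits the common modulus of continuity of $\mathcal S$, so the family $\{S_s\bar f-hs:s\geq 0\}$ is equicontinuous. Combined with pointwise monotone convergence on the compact manifold $M$, an application of Arzel\`a--Ascoli (or a Dini-type argument) upgrades the convergence to uniform.

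It remains to verify the fixed-point identity. For any $t\geq 0$, the semigroup law and additivity with constants give
\[
S_t\bigl(S_s\bar f-hs\bigr)=S_{t+s}\bar f-hs=\bigl(S_{t+s}\bar f-h(t+s)\bigr)+ht.
\]
Letting $s\to\infty$, non-expansiveness makes $S_t$ continuous with respect to uniform convergence, so the left-hand side tends to $S_t\tilde f$ while the right-hand side tends to $\tilde f+ht$, establishing $S_t\tilde f-ht=\tilde f$. The step I expect to need the most care is the infima interchange that produces the displayed formula for $S_s\bar f-hs$: the manipulation itself is routine, but one must use uniform boundedness of $\mathcal S$ to justify swapping the infimum over $t\geq a$ with the infimum over $y\in M$, and then verify carefully that the resulting one-parameter family inherits the equicontinuity modulus of $\mathcal S$ so that Arzel\`a--Ascoli applies.
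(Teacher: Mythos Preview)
Your proposal is correct and follows essentially the same route as the paper's proof: both hinge on the identity $S_s\bar f-hs=\inf_{\sigma\geq s+a}[S_\sigma f-h\sigma]$ (the paper phrases this as ``applying $S_t$ to the definition of $\bar f$'' and then invoking order-preservation), which yields monotonicity and the bound $S_s\bar f-hs\geq\bar f$, after which uniform boundedness plus equicontinuity give uniform convergence and non-expansiveness of $S_t$ delivers the fixed-point identity. One minor remark: your stated concern about needing uniform boundedness to justify the swap $\inf_y\inf_t=\inf_t\inf_y$ is unfounded, since interchanging two infima is always valid.
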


\begin{proof}
By applying $S_t$ to the definition of $\bar f$, it is not hard to see that $S_t\bar f-ht\geq \bar f$. Since $S_t$ is order preserving, we can apply $S_t$ again to this inequality to shows that $t\mapsto S_t\bar f(x)-ht$ is increasing for each $x$ in $M$. It follows from this and Lemma \ref{cpctf} that $S_t\bar f(x)-ht$ converges uniformly to a continuous function $\tilde f$. We apply once again $S_t$ to the definition of $\tilde f$ and use the continuity of the semigroup $S_t$, we get $S_t\tilde f-kt = \tilde f$.
\end{proof}

Finally, we finish the uniqueness of the constant $h$ as a corollary of Theorem \ref{cpctc}.

\begin{cor}
Assume that the function $(t,x,y)\mapsto c_t(x,y)$ is continuous and the manifold $M$ is compact. Let $h$ be as in Theorem \ref{cpctc} and let $f$ be a function which satisfies $S_tf-kt=f$ for some number $k$, then $k=h$.
\end{cor}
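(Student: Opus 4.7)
The plan is to exploit the fixed-point identity $S_t f - kt = f$ together with the two-sided estimate $|c_t(x,y) - ht| \leq K$ from Theorem \ref{cpctc}, and then let $t \to \infty$. Since $M$ is compact and $f$ is continuous (indeed, $f = S_a f - ka$ inherits continuity from Theorem \ref{viscosity}), $f$ is bounded, say $\|f\|_\infty \leq M_0 < \infty$. Unwinding the definition of $S_t$, the identity $S_t f(y) = f(y) + kt$ reads
\[
\inf_{x \in M}\bigl[c_t(x,y) + f(x)\bigr] = f(y) + kt \quad \text{for all } y \in M,\ t > 0.
\]

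First I would establish $k \leq h$. The infimum identity gives, for every $x, y \in M$, the lower bound $c_t(x,y) \geq kt + f(y) - f(x) \geq kt - 2M_0$. Combining with the upper bound $c_t(x,y) \leq ht + K$ from Theorem \ref{cpctc} yields $(k-h)t \leq K + 2M_0$ for all $t \geq a$. Dividing by $t$ and sending $t \to \infty$ gives $k \leq h$.

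For the reverse inequality $h \leq k$, I would use that the infimum in the definition of $S_t f(y)$ is almost attained: fixing any $y \in M$ and any $\varepsilon > 0$, there exists $x = x(t, y, \varepsilon) \in M$ with
\[
c_t(x, y) + f(x) \leq f(y) + kt + \varepsilon,
\]
so $c_t(x, y) \leq kt + 2M_0 + \varepsilon$. On the other hand, the lower bound in Theorem \ref{cpctc} gives $c_t(x,y) \geq ht - K$. Combining, $(h-k)t \leq K + 2M_0 + \varepsilon$ for all $t \geq a$, and dividing by $t$ and sending $t \to \infty$ (then $\varepsilon \to 0$) yields $h \leq k$, hence $k = h$.

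There is no real obstacle here; the argument is purely an application of Theorem \ref{cpctc}. The only subtle point worth stating carefully is that the constant $K$ in Theorem \ref{cpctc} is uniform in $x, y$, so the bounded oscillation of $f$ across the compact manifold is enough to absorb all $t$-independent error terms; this is exactly why compactness of $M$ plays a role.
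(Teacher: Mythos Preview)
Your proof is correct and follows essentially the same approach as the paper: both use the two-sided bound $|c_t(x,y)-ht|\le K$ from Theorem \ref{cpctc}, the boundedness of $f$ on the compact manifold, and a division by $t$ followed by $t\to\infty$. The paper simply packages the two halves of your argument into a single two-sided inequality $c_n(z_n,x)+f(z_n)\ge f(x)+kn\ge c_n(z_n,x)+f(z_n)-\tfrac{1}{n}$ before dividing by $n$, but the content is the same.
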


\begin{proof}
For each natural number $n$, let $z_n$ be points in $M$ which satisfies
\[
c_n(z_n,x)+f(z_n)\geq f(x)+kn=S_nf(x)\geq c_n(z_n,x)+f(z_n)-\frac{1}{n}.
\]
Note that the function $f$ is continuous and $\lim\limits_{n\to\infty}\frac{c_n(z_n,x)}{n}=h$. It follows that if we divide the above inequality by $n$ and let $n$ goes to infinity, we get $k=h$ as claimed.
\end{proof}

\

\section{Optimal Transportation and Weak KAM Theorem}

Let $\mu$ and $\nu$ be two Borel probability measures. Consider the cost function defined in (\ref{controlcost}) and the following Monge-Kantorovich problem of optimal transportation:

\begin{equation}\label{MKcost}
C_T(\mu,\nu)=\inf_{\Pi}\int_{M\times M}c_T(x,y)d\Pi(x,y)
\end{equation}
where the infimum is taken over all measures on $M\times M$ with marginals $\mu$ and $\nu$. That is, if $\pi_1,\pi_2:M\times M\to M$ are the projections onto the first and second entries, then $\pi_{1*}\Pi=\mu$ and $\pi_{2*}\Pi=\nu$.

The above problem (\ref{MKcost}) admits a dual version given by
\begin{equation}\label{dual}
\mathcal I_T(\mu,\nu)=\sup\int_Mg(x)d\nu(x)-\int_Mf(x)\mu(x),
\end{equation}
where the supremum is taken over all pairs of functions $(f,g)$ which satisfy $g(y)-f(x)\leq c_T(x,y)$.

The following theorem is the well known result in \cite{Ka}. See also \cite{Vi1,Vi2}.

\begin{thm}\label{Kandual}
Assume that the function $c_T$ is continuous, then the infimum in (\ref{MKcost}) and the supremum in (\ref{dual}) is achieved. Moreover, for any optimal measure $\Pi$ of (\ref{MKcost}) and any pair of functions $(f,g)$ that maximizes (\ref{dual}), we have that $\Pi$ is concentrated on the set $\{(x,y)\in M\times M|g(y)-f(x)=c_T(x,y)\}$ and  $C_T(\mu,\nu)=\mathcal I_T(\mu,\nu)$.
\end{thm}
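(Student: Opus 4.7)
The plan is to follow the classical proof of Kantorovich duality, which in the present setting of a compact manifold $M$ and a continuous cost $c_T$ is particularly clean. I will (i) obtain an optimal transport plan by weak-$*$ compactness, (ii) establish the no-gap identity $C_T(\mu,\nu)=\mathcal I_T(\mu,\nu)$ via Fenchel--Rockafellar duality, (iii) extract an optimal dual pair by $c$-transform regularization together with Arzel\`a--Ascoli, and (iv) deduce concentration of $\Pi$ on the equality set.

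For (i), the set of transport plans with marginals $\mu,\nu$ is weak-$*$ closed in the space of Borel probability measures on the compact $M\times M$, hence weak-$*$ compact by Prokhorov's theorem. Since $c_T$ is continuous and bounded, the functional $\Pi\mapsto\int c_T\,d\Pi$ is weak-$*$ continuous, so the infimum $C_T(\mu,\nu)$ is attained.

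Step (ii) is the main obstacle. On the Banach space $C(M\times M)$ I define $\Theta_1(u)=0$ if $u\ge -c_T$ pointwise and $+\infty$ otherwise, and $\Theta_2(u)=\int g\,d\nu-\int f\,d\mu$ when $u(x,y)=g(y)-f(x)$ for some $f,g\in C(M)$, and $+\infty$ otherwise. Both functionals are convex, $\Theta_2$ is finite at $u\equiv 0$ (take $f=g=0$), and $\Theta_1$ is continuous there since a small sup-norm perturbation preserves $u\ge-c_T$. The Fenchel--Rockafellar theorem then gives
\[
\inf_{u\in C(M\times M)}\bigl(\Theta_1(u)+\Theta_2(u)\bigr)=\max_{\pi\in\mathcal{M}(M\times M)}\bigl(-\Theta_1^*(-\pi)-\Theta_2^*(\pi)\bigr).
\]
A direct Legendre transform computation identifies the left side with $-\mathcal I_T(\mu,\nu)$; on the right side $\Theta_1^*(-\pi)$ equals $\int c_T\,d\pi$ when $\pi\ge 0$ and $+\infty$ otherwise, while $\Theta_2^*(\pi)$ equals $0$ when $\pi$ has marginals $\mu,\nu$ and $+\infty$ otherwise, so the right side equals $-C_T(\mu,\nu)$. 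The delicate point is verifying the qualification hypothesis and carrying out the Legendre transforms carefully.

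For (iii), I take a maximizing sequence $(f_n,g_n)$ in the dual and regularize by successive $c$-transforms: replace $g_n$ by $g_n^\star(y):=\inf_{x\in M}[c_T(x,y)+f_n(x)]$ and then $f_n$ by $f_n^\star(x):=\sup_{y\in M}[g_n^\star(y)-c_T(x,y)]$. Each operation preserves admissibility $g\le f+c_T$ and does not decrease the dual objective. Uniform continuity of $c_T$ on the compact $M\times M$ forces $(f_n^\star,g_n^\star)$ to share a common modulus of continuity, and the harmless normalization $(f,g)\mapsto(f+a_n,g+a_n)$ with $a_n=-\min f_n^\star$ makes the family uniformly bounded; Arzel\`a--Ascoli then yields a uniformly convergent subsequence whose admissible limit $(f_\star,g_\star)$ attains $\mathcal I_T(\mu,\nu)$. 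Finally, for (iv), given any optimal $\Pi$ and optimal $(f_\star,g_\star)$, the no-gap identity gives
\[
\int_{M\times M}\bigl(c_T(x,y)-g_\star(y)+f_\star(x)\bigr)\,d\Pi(x,y)=C_T(\mu,\nu)-\mathcal I_T(\mu,\nu)=0;
\]
the integrand is continuous and pointwise non-negative, hence vanishes $\Pi$-almost everywhere, which is precisely the claimed concentration of $\Pi$ on $\{(x,y)\in M\times M\mid g_\star(y)-f_\star(x)=c_T(x,y)\}$.
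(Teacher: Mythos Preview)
Your argument is correct: weak-$*$ compactness for the primal, Fenchel--Rockafellar for the no-gap identity, $c$-transform regularization plus Arzel\`a--Ascoli for the dual attainment, and the nonnegative-integrand trick for the concentration statement. This is precisely the standard proof one finds in Villani's books.

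There is, however, nothing to compare against: the paper does not prove Theorem~\ref{Kandual}. It is stated as ``the well known result in \cite{Ka}'' with pointers to \cite{Vi1,Vi2}, and no argument is given. You have simply supplied the proof the paper chose to cite rather than reproduce. One minor remark on your write-up: in step~(ii) you should check that $\Theta_2$ is well defined on the (non-unique) decomposition $u(x,y)=g(y)-f(x)$; this holds because $\mu$ and $\nu$ are probability measures, so the additive constant ambiguity cancels in $\int g\,d\nu-\int f\,d\mu$. With that detail noted, the proposal is complete.
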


Note that if $(f,g)$ maximizes (\ref{dual}), then so is $(f,S_Tf)$. We define
\begin{equation}\label{alphaT}
\alpha_T:=\inf_{\mu}\frac{1}{T}C_T(\mu,\mu),
\end{equation}
where the infimum is taken over all Borel probability measures on $M$.

The following lemma can be proved in same way as in \cite[Lemma 33]{BeBu1}.
\begin{lem}\label{costcovex}
There exists a measure $\mu$ which achieves the infimum in (\ref{alphaT}).
\end{lem}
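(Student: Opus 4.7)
The plan is to apply the direct method of the calculus of variations: take a minimizing sequence of measures on $M$, extract a weakly convergent subsequence via compactness of $M$, and then transfer the convergence from the measures to the optimal transport plans realizing $C_T(\mu_n,\mu_n)$.

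First I would pick a sequence $\{\mu_n\}$ of Borel probability measures on $M$ with $\frac{1}{T}C_T(\mu_n,\mu_n)\to\alpha_T$. Since $M$ is compact, the space $\mathcal{P}(M)$ of Borel probability measures on $M$ is weakly compact by Prokhorov's theorem, so after passing to a subsequence I may assume $\mu_n\rightharpoonup \mu$ for some $\mu\in\mathcal{P}(M)$.

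Next, for each $n$, Theorem \ref{Kandual} supplies an optimal transport plan $\Pi_n\in\mathcal{P}(M\times M)$ with both marginals equal to $\mu_n$ and satisfying $C_T(\mu_n,\mu_n)=\int_{M\times M} c_T\,d\Pi_n$. Since $M\times M$ is compact, $\{\Pi_n\}$ is automatically tight, so a further extraction yields $\Pi_n\rightharpoonup \Pi$ for some $\Pi\in\mathcal{P}(M\times M)$. Because the coordinate projections $\pi_i:M\times M\to M$ are continuous, $\pi_{i*}\Pi_n\rightharpoonup \pi_{i*}\Pi$; this forces both marginals of $\Pi$ to equal $\mu$, so $\Pi$ is admissible in the Kantorovich problem defining $C_T(\mu,\mu)$.

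Finally, since $c_T$ is continuous on the compact set $M\times M$ by hypothesis, it is bounded and continuous, and the definition of weak convergence of measures gives $\int c_T\,d\Pi_n\to \int c_T\,d\Pi$. Combining these facts,
\[
C_T(\mu,\mu)\leq \int_{M\times M} c_T\,d\Pi = \lim_{n\to\infty}\int_{M\times M} c_T\,d\Pi_n = \lim_{n\to\infty} C_T(\mu_n,\mu_n) = T\alpha_T,
\]
so $\mu$ attains the infimum. The only conceivable obstacle is stability of optimality under weak convergence of plans, but this is immediate here since the cost is bounded and continuous on the compact product $M\times M$, so the argument truly reduces to invoking Prokhorov twice plus continuity of $c_T$.
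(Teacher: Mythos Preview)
Your argument is correct and is precisely the standard direct-method proof the paper has in mind: the paper does not give its own proof but simply refers to \cite[Lemma 33]{BeBu1}, where exactly this compactness-plus-lower-semicontinuity argument (take optimal plans for a minimizing sequence, extract a weak limit on $M\times M$, and use continuity of the cost) is carried out. The only cosmetic addition you might make is to note that $\alpha_T$ is finite to begin with, which is immediate since $c_T$ is bounded on the compact set $M\times M$.
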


The next theorem is a generalization of a result \cite{BeBu1} which gives another characterization of the number $h$ in Theorem \ref{wKAM}.

\begin{thm}\label{alpha}
Under the assumptions in Theorem \ref{wKAM}, we have $\alpha_T=h$ for each $T>0$.
\end{thm}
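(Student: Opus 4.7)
The plan is to prove $\alpha_T \ge h$ and $\alpha_T \le h$ separately, using the weak KAM solution $\tilde f$ from Theorems \ref{viscosity}--\ref{wKAMS} as the bridge between the two sides.

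\textbf{Lower bound.} The fixed-point identity $S_T\tilde f = \tilde f + hT$ is equivalent to $\tilde f(y) + hT \le c_T(x,y) + \tilde f(x)$ for all $x,y \in M$, so the pair $(\tilde f,\tilde f + hT)$ is admissible in the Kantorovich dual \ref{dual}. Applied to $\nu = \mu$, Theorem \ref{Kandual} gives
$$C_T(\mu,\mu) \ge \int_M(\tilde f + hT)\,d\mu - \int_M \tilde f\,d\mu = hT$$
for every Borel probability measure $\mu$, so $\alpha_T \ge h$.

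\textbf{Upper bound.} For this direction I would construct a \emph{calibrated sequence} and extract a self-transporting measure from its empirical distributions. Compactness of $M$ together with continuity of $c_T$ (Theorem \ref{morecontinuity}) and of $\tilde f$ (the uniform limit in Theorem \ref{wKAMS}) forces the infimum in $S_T\tilde f(y_0) = \inf_x[c_T(x,y_0)+\tilde f(x)]$ to be attained. Pick any $y_0 \in M$, a minimizer $y_1$, and iterate to obtain $\{y_k\}_{k\ge 0}$ with
$$c_T(y_{k+1},y_k) = \tilde f(y_k) - \tilde f(y_{k+1}) + hT, \qquad k \ge 0.$$
Form the discrete transport plans $\Pi_n := \tfrac{1}{n}\sum_{k=0}^{n-1}\delta_{(y_{k+1},y_k)}$. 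Their two marginals $\mu_n = \tfrac{1}{n}\sum_{k=1}^{n}\delta_{y_k}$ and $\nu_n = \tfrac{1}{n}\sum_{k=0}^{n-1}\delta_{y_k}$ differ by $\tfrac{1}{n}(\delta_{y_n}-\delta_{y_0})$, which tends to $0$ in total variation, and the costs telescope:
$$\int_{M\times M} c_T\,d\Pi_n = \frac{1}{n}\sum_{k=0}^{n-1}[\tilde f(y_k)-\tilde f(y_{k+1})+hT] = hT + \frac{\tilde f(y_0)-\tilde f(y_n)}{n} = hT + O(1/n),$$
using boundedness of $\tilde f$ on the compact manifold $M$. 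By Prokhorov, a subsequence of $\Pi_n$ converges weakly-$*$ to some $\Pi^*$ on $M\times M$, whose two marginals coincide with a common weak-$*$ limit $\mu^*$ of $(\mu_n),(\nu_n)$. Since $c_T$ is continuous and bounded on the compact set $M\times M$, $\int c_T\,d\Pi^* = hT$, so $\Pi^*$ is an admissible self-transport plan for $\mu^*$ of cost $hT$, and therefore $\alpha_T \le \tfrac{1}{T}C_T(\mu^*,\mu^*) \le h$.

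\textbf{Main obstacle.} The only place genuine care is needed is the calibration step: one must know that the infimum in $S_T\tilde f(y_0)$ is attained in order to produce an \emph{exact}, not merely approximate, telescoping identity. This is an appeal to compactness of $M$ together with continuity of $c_T$ and of $\tilde f$; were either continuity weaker, one would select $\varepsilon_k$-optimizers and run a diagonal argument, paying only an additional $o(1)$ correction that still vanishes in the limit $n\to\infty$. Everything else is a routine Prokhorov / weak-$*$ continuity argument enabled by compactness of $M$.
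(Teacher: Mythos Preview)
Your argument is correct. The lower bound $\alpha_T\ge h$ is exactly the paper's: plug the weak KAM fixed point into Kantorovich duality. For the upper bound, however, you take a genuinely different route. The paper never builds a calibrated orbit; instead it proves an \emph{interpolation lemma} (Lemma \ref{interpolation}) giving, for any $\nu$ and any $N$, intermediate measures $\nu=\mu_0,\mu_1,\dots,\mu_N=\nu$ with $C_{NT}(\nu,\nu)=\sum_{i=1}^N C_T(\mu_{i-1},\mu_i)$, then uses convexity of $C_T$ (inherited from $\mathcal I_T$) to bound this below by $N\,C_T(\tilde\mu,\tilde\mu)$ with $\tilde\mu=\frac1N\sum_i\mu_i$, obtaining $\frac1{NT}C_{NT}(\nu,\nu)\ge\alpha_T$; the estimate $|c_t-ht|\le K$ of Theorem \ref{cpctc} then forces the left side to $h$ as $N\to\infty$. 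Your approach is the classical Fathi--Mather construction: iterate the calibration identity, form empirical plans, and pass to a Prokhorov limit. It is more elementary in that it avoids both the interpolation lemma and the convexity of $C_T$, and it has the bonus of directly producing a generalized Mather measure $\Pi^*$ with $\int c_T\,d\Pi^*=hT$. The paper's argument, by contrast, stays entirely at the measure level and emphasizes the optimal-transport structure, at the price of the auxiliary Lemma \ref{interpolation}.
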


Following \cite{BeBu2}, we call measures $\Pi$ on the space $M\times M$ generalized Mather measure if $\pi_{1*}\Pi=\pi_{2*}\Pi$ and
\[
\frac{1}{T}\int_{M\times M}c_T(x,y)d\Pi(x,y)=h.
\]
The following corollary describes the support of the generalized Mather measures.

\begin{cor}
Suppose that we make the same assumptions as in Theorem \ref{wKAM} and let $g$ be a function which satisfies $S_tg=g+ht$. If $\Pi$ is a generalized Mather measure which
satisfies
\[
\frac{1}{T}\int_{M\times M}c_T(x,y)d\Pi(x,y)=h,
\]
then the support of $\Pi$ is contained in the set
\[
\{(x,y)|c_T(x,y)=g(y)-g(x)+hT\}.
\]
\end{cor}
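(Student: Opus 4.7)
The plan is to use the fixed-point identity $S_T g = g + hT$ as a Kantorovich-type potential that turns the generalized Mather condition into an equality for the integrand itself. Unfolding the definition of the Lax--Oleinik semigroup, $S_T g(y) = \inf_{x \in M}[c_T(x,y) + g(x)] = g(y) + hT$ gives the pointwise inequality
\[
c_T(x,y) \;\geq\; g(y) - g(x) + hT \qquad \text{for all } x, y \in M.
\]
This is exactly the statement that the pair $(g, g+hT)$ is admissible in the dual problem (\ref{dual}) at time $T$.

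Next I would integrate this inequality against the measure $\Pi$. Because $\Pi$ is a generalized Mather measure, its two marginals coincide; call this common marginal $\mu$. Then
\[
\int_{M\times M}\bigl(g(y) - g(x)\bigr)\,d\Pi(x,y) \;=\; \int_M g\,d\mu - \int_M g\,d\mu \;=\; 0,
\]
so integrating the inequality above yields $\int c_T\,d\Pi \geq hT$. But by hypothesis $\int c_T\,d\Pi = hT$, so equality holds in the integrated inequality, meaning the nonnegative function
\[
F(x,y) \;:=\; c_T(x,y) - g(y) + g(x) - hT
\]
has $\int_{M\times M} F\,d\Pi = 0$.

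Finally I would promote this almost-everywhere vanishing to a support statement. The cost $c_T$ is continuous by the hypothesis of Theorem \ref{wKAM} (via Theorem \ref{continuity}), and the fixed point $g$ is continuous since by Theorem \ref{wKAMS} it arises as a uniform limit of continuous functions $S_t \bar f - ht$. Hence $F$ is continuous and nonnegative, so $\{F = 0\}$ is closed and its complement is open. If $\Pi$ gave positive mass to any open set on which $F$ were strictly positive, the integral $\int F\,d\Pi$ would be strictly positive, a contradiction. Therefore $\mathrm{supp}(\Pi)$ is contained in $\{F = 0\} = \{(x,y) \mid c_T(x,y) = g(y) - g(x) + hT\}$. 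No genuine obstacle arises here; the only delicate point is the continuity of $g$, which is already supplied by the existence proof of the weak KAM solution.
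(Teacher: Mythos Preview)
Your argument is correct and rests on the same core observation as the paper's: the identity $S_Tg=g+hT$ makes $(g,\,g+hT)$ an admissible pair for the cost $c_T$, and the Mather hypothesis forces equality in the integrated inequality. The paper, however, does not carry out the direct ``integrate a nonnegative continuous function to zero'' step; instead it routes the conclusion through Kantorovich duality (Theorem~\ref{Kandual}). Writing $\mu=\pi_{1*}\Pi=\pi_{2*}\Pi$, it notes
\[
C_T(\mu,\mu)=hT=\int_M S_Tg\,d\mu-\int_M g\,d\mu\le \mathcal I_T(\mu,\mu)=C_T(\mu,\mu),
\]
so $\Pi$ is an optimal plan and $(g,S_Tg)$ an optimal dual pair, and the concentration clause of Theorem~\ref{Kandual} immediately gives the support statement. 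Your route is more elementary and self-contained, needing no optimal-transport black box; the paper's is shorter once that machinery is in place. One minor correction: your continuity argument for $g$ appeals to Theorem~\ref{wKAMS}, but the corollary only assumes $S_tg=g+ht$, not that $g$ is the particular fixed point built there. The cleaner justification is simply $g=S_ag-ha$ for any $a>0$, and $S_a$ maps any function to a continuous one by Theorem~\ref{viscosity}.
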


\begin{proof}
Let $\mu=\pi_{1*}\Pi=\pi_{2*}\Pi$. Then
\[
C_T(\mu,\mu)=hT=\int_MS_Tgd\mu-\int_Mgd\mu\leq \mathcal I_T(\mu,\mu)=C_T(\mu,\mu).
\]
It follows that the support of $\Pi$ is contained in
\[
\{(x,y)|c_T(x,y)=S_Tg(y)-g(x)\}.
\]
\end{proof}

\begin{proof}[Proof of Theorem \ref{alpha}]
Let $g$ be a function which satisfies $S_tg=g+ht$ and let $\mu$ be a minimizer corresponding to the minimization problem of $\alpha_T$ in (\ref{alphaT}). It follows from Theorem \ref{Kandual} that
\begin{equation}\label{alpha1}
T\alpha_T=C_T(\mu,\mu)=\mathcal I_T(\mu,\mu)\geq \int_M S_Tgd\mu-\int_Mgd\mu=hT
\end{equation}
for all $T>0$.

For the proof of the following lemma, we follow closely \cite[Lemma 7]{BeBu2}.
\begin{lem}\label{interpolation}
Let $\nu_1$ and $\nu_2$ be two Borel probability measures and let $0\leq s\leq T$, then there exists a Borel probability measure $\nu$ such that
\[
C_T(\nu_1,\nu_2)=C_s(\nu_1,\nu)+C_{T-s}(\nu,\nu_2).
\]
\end{lem}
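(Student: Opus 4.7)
The plan is to construct $\nu$ from an optimal transport plan for $C_T(\nu_1,\nu_2)$ by cutting optimal trajectories at time $s$. By Theorem \ref{Kandual}, first pick an optimal plan $\Pi$ realizing $C_T(\nu_1,\nu_2)$. Cutting any admissible trajectory from $x$ to $y$ at time $s$, and noting that concatenations of admissible trajectories are again admissible, yields the dynamic programming identity
\[
\min_{z\in M}\bigl[c_s(x,z)+c_{T-s}(z,y)\bigr]=c_T(x,y),
\]
the minimum being attained because $M$ is compact and the $c_t$ are continuous. Therefore the set-valued map
\[
\Gamma(x,y):=\{z\in M:c_s(x,z)+c_{T-s}(z,y)=c_T(x,y)\}
\]
has closed, non-empty values with closed graph, and a standard measurable selection theorem (Kuratowski--Ryll-Nardzewski) furnishes a Borel selector $(x,y)\mapsto z(x,y)\in\Gamma(x,y)$.

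Next I would push $\Pi$ forward by the map $(x,y)\mapsto(x,z(x,y),y)$ to obtain a Borel probability measure $\widetilde\Pi$ on $M\times M\times M$, and define $\nu:=(\pi_2)_*\widetilde\Pi$, where $\pi_2$ denotes projection onto the middle factor. The projections $(\pi_{12})_*\widetilde\Pi$ and $(\pi_{23})_*\widetilde\Pi$ are transport plans with marginals $(\nu_1,\nu)$ and $(\nu,\nu_2)$ respectively, and integrating the pointwise identity $c_s(x,z(x,y))+c_{T-s}(z(x,y),y)=c_T(x,y)$ against $\Pi$ produces
\[
C_s(\nu_1,\nu)+C_{T-s}(\nu,\nu_2)\le\int c_T\,d\Pi=C_T(\nu_1,\nu_2).
\]

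For the reverse inequality I would invoke the gluing lemma: given optimal plans $\Sigma_1,\Sigma_2$ for $C_s(\nu_1,\nu)$ and $C_{T-s}(\nu,\nu_2)$ (Theorem \ref{Kandual}), build a measure $\Sigma$ on $M\times M\times M$ with $(\pi_{12})_*\Sigma=\Sigma_1$ and $(\pi_{23})_*\Sigma=\Sigma_2$. Its projection $(\pi_{13})_*\Sigma$ is a transport plan between $\nu_1$ and $\nu_2$, and since concatenating trajectories gives $c_T(x,y)\le c_s(x,z)+c_{T-s}(z,y)$ for all $x,y,z$, one obtains $C_T(\nu_1,\nu_2)\le C_s(\nu_1,\nu)+C_{T-s}(\nu,\nu_2)$. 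Combining the two bounds yields the asserted equality. The only delicate point in the argument is the measurable selection of $z(x,y)$; once that is in place the rest reduces to a routine push-forward/Fubini computation that relies only on continuity of the family $\{c_t\}$.
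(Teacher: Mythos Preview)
Your argument is correct, but it differs from the paper's in how the interpolating measure $\nu$ is produced. Both proofs handle the inequality $C_T(\nu_1,\nu_2)\le C_s(\nu_1,\nu)+C_{T-s}(\nu,\nu_2)$ for \emph{any} $\nu$ by the gluing/disintegration construction. The divergence is in the construction of a $\nu$ realizing equality. The paper does not select a midpoint $z(x,y)$ at all: instead it introduces the set $\mathcal P$ of pairs $(\nu_1,\nu_2)$ for which the conclusion holds, verifies that $(\delta_x,\delta_y)\in\mathcal P$ by cutting an \emph{optimal} trajectory at time $s$, and then argues that $\mathcal P$ is convex and weak-$*$ closed, so density of convex combinations of Dirac pairs finishes the job. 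Your route is more direct and constructive---$\nu$ is explicitly the law of $z(x,y)$ under an optimal plan---at the price of invoking Kuratowski--Ryll-Nardzewski; the paper's route avoids selection theorems and the existence of optimal trajectories for general $(\nu_1,\nu_2)$, but pays with a soft approximation argument whose closedness step implicitly uses lower semicontinuity of $C_t$ under weak-$*$ convergence. Either approach is adequate here since $M$ is compact and the costs $c_t$ are continuous.
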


\begin{proof}
By Theorem \ref{Kandual}, we can find measures $\Pi_1,\Pi_2$ on $M\times M$ such that $C_s(\nu_1,\nu)=\int_{M\times M}c_s(x,y)d\Pi_1(x,y)$ and $C_{T-s}(\nu,\nu_2)=\int_{M\times M}c_{T-s}(x,y)d\Pi_2(x,y)$. By disintegration of measures, there are measures $\mu^y_1$ and $\mu^x_2$ such that  $d\Pi_1(x,y)=d\mu_1^y(x)d\nu(y)$ and $d\Pi_2(x,y)=d\mu_2^x(y)d\nu(x)$. Let $\mu$ be the measure defined by
\[
\int_{M\times M}f(x,y)d\Pi(x,y)=\int_{M\times M\times M}f(x,y)d\mu^z_1(x)d\mu^z_2(y)d\nu(z).
\]
It is not hard to check that the marginals of $\Pi$ are $\nu_1$ and $\nu_2$. Therefore, we get
\[
\begin{split}
C_T(\nu_1,\nu_2)&\leq \int_{M\times M}c_T(x,y)d\Pi(x,y) \\& \leq \int_{M\times M\times M}c_s(x,z)+c_{T-s}(z,y)d\mu^z_1(x)d\mu^z_2(y)d\nu(z) \\& =\int_{M\times M}c_s(x,z)d\Pi_1(x,z)+\int_{M\times M}c_{T-s}(z,y)d\Pi_2(z,y)\\&=C_s(\nu_1,\nu)+C_{T-s}(\nu,\nu_2).
\end{split}
\]

Let $\mathcal P$ be the set of pairs of Borel probability measures $(\nu_1,\nu_2)$ which satisfies the conclusion of the lemma. It is not hard to see that $(\delta_x,\delta_y)$ is contained in $\mathcal P$, where $\delta_x$ is the Dirac mass at $x$.  Indeed, let $x(\cdot):[0,T]\to M$ be an admissible path which satisfy $x(0)=x$, $x(T)=y$ and achieve the infimum in (\ref{controlcost}). Then,

\begin{equation}\label{interpolation1}
\begin{split}
C_s(\delta_x,\delta_{x(s)})+C_{T-s}(\delta_{x(s)},\delta_y)=c_s(x,x(s))+c_{T-s}(x(s),y)\\=c_T(x,y)=C_T(\delta_x,\delta_y).
\end{split}
\end{equation}

To finish the proof, it remains to notice that the set $\mathcal P$ is convex and weak-$*$ closed. Therefore, the result follows from approximation by delta masses.
\end{proof}

Now let $\nu$ be a measure which satisfies $C_{NT}(\nu,\nu)=\alpha_{NT}$. It follows from Lemma \ref{interpolation} that there exists Borel probability measures $\nu=\mu_0,\mu_1,...,\mu_N=\nu$ such that
\[
C_{NT}(\nu,\nu)=\sum_{i=1}^NC_T(\mu_{i-1},\mu_i).
\]

Since $\mathcal I_T$ is convex and so is $C_T$. Therefore,
\[
\frac{1}{NT}C_{NT}(\nu,\nu)\geq \frac{1}{T}C_T(\tilde \mu,\tilde \mu)\geq \alpha_T\geq h,
\]
where $\tilde \mu=\frac 1N\sum_{i=1}^N\mu_i$.

Finally, it follows from Theorem \ref{cpctc} that $\lim\limits_{N\to\infty}\frac{1}{NT}C_{NT}(\nu,\nu)=h$. This finishes the proof.
\end{proof}

\bigskip

\end{document}